\declaretheorem[name=Theorem,numberwithin=section]{theorem}
\declaretheorem[name=Proposition,sibling=theorem]{proposition}
\declaretheorem[name=Lemma,sibling=theorem]{lemma}
\declaretheorem[name=Corollary,sibling=theorem]{corollary}
\declaretheorem[name=Remark,sibling=theorem]{remark}
\declaretheorem[name=Definition,sibling=theorem]{definition}
\newcommand{\RR}{\mathbb R}
\newcommand{\EE}{\mathsf E}
\newcommand{\NN}{\mathbb N}
\newcommand{\U}{\mathsf{U}}
\renewcommand{\P}{\mathsf{P}}
\def\ddefloop#1{
  \ifx
    \ddefloop#1
  \else\ddef{#1}
    \expandafter
    \ddefloop
  \fi}
\def\ddef#1{\expandafter\def\csname #1cal\endcsname{\ensuremath{\mathcal{#1}}}}
\newcommand{\1}{\mathds{1}}
\newcommand{\infseqn}[1]{(#1)_{n \in \NN}}
\newcommand{\infseqnz}[1]{(#1)_{n \in \NN_0}}
\newcommand{\dd}{\mathrm{d}}
\newcommand{\iid}{\text{i.i.d.}}
\newcommand{\eps}{\varepsilon}
\newcommand{\Var}{\mathrm{Var}}
\newcommand{\supP}{{\sup_{\P \in \Pcal} \,}}
\newcommand{\Pin}{{\P \in \Pcal}}
\newcommand{\supkm}{{\sup_{k \geq m}}}
\newcommand{\tth}{{\text{th}}}
\newcommand{\mto}{{m \nearrow \infty}}
\newcommand{\xto}{{x \nearrow \infty}}
\newcommand{\ntoinfty}{{n \nearrow \infty}}
\newcommand{\toinfty}{\nearrow \infty}
\newcommand{\tozero}{\searrow 0}
\newcommand{\sigmaub}{{\widebar \sigma}}
\newcommand{\sigmasqub}{{\widebar \sigma^2}}
\newcommand{\brackq}{{(q)}}
\newcommand{\bracktwo}{{(2)}}
\newcommand{\onediv}{{\mathrm{1\text{-}div}}}
\newcommand{\qdiv}{{q\mathrm{\text{-}div}}}
\newcommand{\fluc}{{\mathrm{fluc}}}
\newcommand{\brackeps}{{(\eps)}}
\newcommand{\brackmeps}{{(m,\eps)}}
\newcommand{\Pcalbar}{\overline \Pcal}
\newif\ifverbose %
\newcommand{\verbose}[1]{
  \ifverbose\textcolor{gray}{\textit{v $\rightarrow$}}\quad #1
    \textcolor{gray}{\textit{nv $\rightarrow$}}\quad
  \fi}
\title{Concentration inequalities for strong laws\\and laws of the iterated logarithm}
\author{
  Johannes Ruf$^\dagger$ and Ian Waudby-Smith$^\ddagger$\\ 
  $^\dagger$Data Science Institute and Department of Mathematics, London School of Economics\\
  
  $^\ddagger$Department of Statistics and The Miller Institute, University of California, Berkeley
}
\begin{document}
\maketitle
\setcounter{tocdepth}{1}
\makeatletter
\renewcommand\tableofcontents{%
  \@starttoc{toc}%
}

\makeatother

\begin{abstract}
  We derive concentration inequalities for sums of independent and identically distributed random variables that yield non-asymptotic generalizations of several strong laws of large numbers including some of those due to Kolmogorov [1930], Marcinkiewicz and Zygmund [1937], Chung [1951], Baum and Katz [1965], Ruf, Larsson, Koolen, and Ramdas [2023], and Waudby-Smith, Larsson, and Ramdas [2024]. As applications, we derive non-asymptotic iterated logarithm inequalities in the spirit of Darling and Robbins [1967], as well as pathwise (sometimes described as ``game-theoretic'') analogues of strong laws and laws of the iterated logarithm.

\end{abstract}

\section{Introduction}

The strong law of large numbers (SLLN) due to \citet{kolmogorov1930loi} states that for independent and identically distributed (\iid{}) random variables $\infseqn{X_n}$ on a probability space $(\Omega, \Fcal, \P)$ with expected value zero, their partial sample averages converge to zero with $\P$-probability one, i.e.  
\begin{equation}\label{eq:intro-kolmogorov-slln}
  \frac{S_n}{n} = o(1)\quad \text{with $\P$-probability one,}
\end{equation}
where $S_n := \sum_{i=1}^n X_i$.
The SLLNs due to \citet{marcinkiewicz1937fonctions} state that if in addition, the $q^\tth$ moment of $X_1$ is bounded for some $q \in [1,2)$, i.e.~$\EE_\P|X_1|^q < \infty$, then the rate of convergence in \eqref{eq:intro-kolmogorov-slln} can be upgraded to $o(n^{1/q - 1})$, meaning
\begin{equation}\label{eq:intro-mz-slln}
  \frac{S_n}{n^{1/q}} = o(1)\quad \text{with $\P$-probability one.}
\end{equation}
The quote \textit{``behind every limit theorem is an inequality''} is often attributed to Kolmogorov.\footnote{See \citep[p.~216]{tropp2023probability}.} In this spirit, \cref{section:slln} states non-asymptotic and time-uniform concentration inequalities that immediately yield \eqref{eq:intro-kolmogorov-slln} and \eqref{eq:intro-mz-slln}. Moreover, since the constants therein are explicit, one can consider a family of probability measures for which certain transformations of $X_1$ are uniformly integrable, leading to non-asymptotic generalizations of some existing uniform SLLNs in the literature \citep{chung_strong_1951,waudby2024distribution}. Here, ``uniformity'' is meant in the sense of \citet{chung_strong_1951} and this term will be recalled precisely in \cref{corollary:chung-slln}. Furthermore, these concentration inequalities are sufficiently sharp to conclude SLLNs in the spirit of \citet{baum1965convergence}. 

Despite these implications, our proofs proceed through different but elementary arguments, relying on properties of time-uniform supermartingale concentration.
In the case of random variables having finite variances, \cref{section:lil} combines the aforementioned concentration inequalities with a result of \citet{howard_exponential_2018} to arrive at an iterated logarithm inequality that immediately implies a distribution-uniform generalization of the upper bound in the law of the iterated logarithm (LIL) due to \citet{kolmogorov1929uber}. The same section also extends the iterated logarithm inequalities of \citet{darling1967iterated} to random variables without moment generating functions and leads to an improvement of an LIL found in \citet[Theorem~6]{baum1965convergence}. The aforementioned applications to Baum-Katz-type SLLNs and LILs can be found in \cref{section:baum-katz}. Finally, \cref{section:game-theoretic} applies the inequalities of the preceding sections to provide pathwise (sometimes referred to as ``game-theoretic'') versions of SLLNs and LILs. These applications yield a strengthening and an extension of a SLLN found in \citet[Theorem~4.3]{ruf2022composite} to higher moments and to LILs.

The literature on laws of large numbers is vast and several extensions and refinements have been seen even in recent years. See \citet{gut1978marcinkiewicz,gut1980convergence}, \citet{gut2011intermediate}, \citet{lanzinger2003baum,lanzinger2004refined}, and \citet{neri2025quantitative} for advances on Baum-Katz-type laws of large numbers. See \citet{rakhlin2015sequential} for SLLNs for empirical processes that are uniform with respect to a class of functions. See also \citet{karatzas2023weak,karatzas2023strong} and \citet{berkes2025hereditary} for laws of large numbers in hereditary convergence and \citet{vovk2025law} for a non-asymptotic weak law. In a similar spirit, obtaining non-asymptotic inequalities that yield familiar asymptotic results has been the object of study in the context of central limit theorems and strong invariance principles \citep{berry1941accuracy,esseen1956moment,katz1963note,austern2022efficient,ye2025computable,waudby2025nonasymptotic}. 

\paragraph{Notation and conventions.} Throughout, we will let $(\Omega, \Fcal)$ denote a fixed measurable space equipped with a stochastic process $\infseqn{X_n}$. Throughout, $\Pcalbar$ will denote the family of all probability measures for which $\infseqn{X_n}$ is a sequence of \iid{} integrable random variables with expected value zero. For any probability measure $\P$, we denote the expected value of a $\P$-integrable random variable $Y$ by $\EE_\P[Y]$ and its variance, provided it exists, by $\Var_\P[Y]$.

\section{Concentration for strong laws of large numbers}\label{section:slln}
Let us first derive a non-asymptotic concentration inequality that implies both Kolmogorov's and Chung's SLLNs in \eqref{eq:intro-kolmogorov-slln} and \cref{corollary:chung-slln}, respectively.

\begin{theorem}
[$L^1$ concentration for the strong law of large numbers]
\label{theorem:l1-concentration}
Fix $\P \in \Pcalbar$ and define the truncated first absolute moment $\U_\P(x)$ of $X_1$ under the measure $\P$ at a lower truncation level $x$ as
  \begin{equation}\label{eq:l1-uniform-integrability-tail}
    \U_\P(x) := \EE_\P \left [ |X_1|\1 \{ |X_1| \geq x \} \right ];\qquad x \geq 0.
  \end{equation}
  For any $\lambda \in (0, 1/2)$, $\eps > 0$, and $m \in \NN$, we have
  \begin{equation}\label{eq:l1-concentration}
    \P \left [ \supkm \frac{|S_k|}{k}  \geq \eps \right ] \leq \frac{262}{\eps^2 \wedge 1} \left ( m^{2\lambda - 1} + \U_\P(m^\lambda) \right ). 
  \end{equation}
\end{theorem}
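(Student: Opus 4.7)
The plan is to truncate each $X_i$ at level $m^\lambda$, writing $Y_i := X_i \1\{|X_i| < m^\lambda\}$ and $Z_i := X_i - Y_i$. Since $\EE_\P X_1 = 0$, centering gives $\EE_\P Y_1 = -\EE_\P Z_1$ with $|\EE_\P Z_1| \leq \EE_\P|Z_1| = \U_\P(m^\lambda)$. Introducing the mean-zero martingales
\begin{equation*}
M_k^Y := \sum_{i=1}^k (Y_i - \EE_\P Y_1), \qquad M_k^Z := \sum_{i=1}^k (Z_i - \EE_\P Z_1),
\end{equation*}
the decomposition $S_k = M_k^Y + M_k^Z$ reduces the problem, by a triangle inequality and a union bound, to estimating $\P[\sup_{k \geq m} |M_k^Y|/k \geq \eps/2]$ and $\P[\sup_{k \geq m} |M_k^Z|/k \geq \eps/2]$ separately.

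\textbf{The bounded part.} The increments of $M_k^Y$ are bounded by $2m^\lambda$ and satisfy $\EE_\P Y_1^2 \leq m^{2\lambda}$. Partitioning $\{k \geq m\}$ into dyadic blocks $[2^j, 2^{j+1})$ with $2^j \geq m/2$ and applying Kolmogorov's maximal inequality on each block,
\begin{equation*}
\P\!\left[\sup_{k \in [2^j, 2^{j+1})} \frac{|M_k^Y|}{k} \geq \frac{\eps}{2}\right] \leq \P\!\left[\max_{k \leq 2^{j+1}} |M_k^Y| \geq \frac{2^j \eps}{2}\right] \leq \frac{2^{j+1} m^{2\lambda}}{(2^j\eps/2)^2} = \frac{8 m^{2\lambda}}{2^j \eps^2},
\end{equation*}
and summing the resulting geometric series over $j$ produces a contribution of order $m^{2\lambda-1}/\eps^2$, matching the first summand on the right-hand side of \eqref{eq:l1-concentration}.

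\textbf{The tail part and main obstacle.} The martingale $M_k^Z$ need not have finite variance, so Kolmogorov's inequality is unavailable, and a naive dyadic application of Doob's $L^1$ inequality diverges. I would instead exploit the bound $|M_k^Z|/k \leq V_k/k + |\EE_\P Z_1|$, where $V_k := \sum_{i=1}^k |Z_i|$ is nondecreasing with $\EE_\P V_k = k\U_\P(m^\lambda)$: the deterministic drift $|\EE_\P Z_1| \leq \U_\P(m^\lambda)$ is absorbed directly, and monotonicity of $V_k$ yields $\sup_{k \in [2^j,2^{j+1})} V_k/k \leq 2 V_{2^{j+1}}/2^{j+1}$, reducing matters to a time-uniform bound on the sample-mean subsequence $M_j := V_{2^{j+1}}/2^{j+1}$. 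A direct computation shows that $M_{j+1} = (M_j + \bar W_j)/2$, where $\bar W_j$ is the average of a fresh batch of $2^{j+1}$ independent copies of $|Z_1|$ that is independent of $\Fcal_{2^{j+1}}$; in particular $(M_j - \U_\P(m^\lambda))^+$ satisfies a contractive supermartingale-type recursion. The main obstacle is to convert this recursion into a genuine time-uniform tail bound of order $\U_\P(m^\lambda)/(\eps^2 \wedge 1)$ under only the $L^1$ integrability hypothesis; this is precisely where the ``time-uniform supermartingale concentration'' tools announced in the section preamble, presumably via a Ville-type estimate on a nonnegative supermartingale built from the $|Z_i|$, are expected to play the decisive role. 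Combining the two contributions and carefully tracking constants through the dyadic geometric series then produces the universal constant $262$ in \eqref{eq:l1-concentration}.
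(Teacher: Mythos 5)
Your decomposition and your treatment of the bounded part are sound: truncation at $m^\lambda$, Kolmogorov's maximal inequality on dyadic blocks, and geometric summation do give a contribution of order $m^{2\lambda-1}/\eps^2$. This is also a genuinely different route from the paper, which never decomposes $S_k$ itself; it recenters the event by $2\U_\P(m^\lambda)$ (absorbing the case $\U_\P(m^\lambda)>\eps/4$ into an indicator) and then invokes the $L^1$ line-crossing inequality of Ruf et al.\ with $\gamma=m$ and $x=m^\lambda$ as a black box. The problem is that your tail part is not a proof. You correctly observe that Kolmogorov's inequality and naive dyadic Markov bounds fail for $M_k^Z$, reduce matters to a time-uniform bound on $\sup_{k\geq m} V_k/k$ with $V_k=\sum_{i\leq k}|Z_i|$, set up a batch recursion for $M_j=V_{2^{j+1}}/2^{j+1}$, and then explicitly defer the decisive step (``the main obstacle is to convert this recursion into a genuine time-uniform tail bound\dots'') to unspecified supermartingale tools. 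That step is exactly where the content of the theorem lies --- a bound of the form $\P[\sup_{k\geq m}V_k/k\geq a]\leq C\,\U_\P(m^\lambda)/a$ under only integrability --- so as written the argument is incomplete. (As a side remark, the claimed recursion is itself delicate: $(M_j-\U_\P(m^\lambda))^+$ is not a supermartingale without further correction terms.)

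The gap is fillable, and more easily than your recursion suggests. Since the $|Z_i|$ are i.i.d., nonnegative and integrable with mean $\U_\P(m^\lambda)$, the sample means $V_k/k$ form a nonnegative reverse martingale with respect to the exchangeable filtration, and Doob's maximal inequality applied after time reversal on $\{m,\dots,N\}$ (then letting $N\toinfty$) gives exactly $\P[\sup_{k\geq m}V_k/k\geq a]\leq \EE_\P|Z_1|/a = \U_\P(m^\lambda)/a$, with no dyadic blocking and no variance required; equivalently one can argue via Ville's inequality for a suitable nonnegative supermartingale, which is essentially how the cited line-crossing inequality is itself proved. With $a=\eps/4$ this contributes $4\U_\P(m^\lambda)/\eps\leq 4\U_\P(m^\lambda)/(\eps^2\wedge 1)$, and together with your bounded-part estimate and the drift term the constant $262$ is comfortably attained. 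Until you supply this (or an equivalent) maximal inequality, the proposal does not establish the theorem.
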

Let us observe how Kolmogorov's SLLN follows immediately from \cref{theorem:l1-concentration}. Note that for any probability measure $\P \in \Pcalbar$ and any $x \geq 0$, it holds that $\U_\P(x) < \infty$. Hence the right-hand side of \eqref{eq:l1-concentration} vanishes as $\mto$ by monotone convergence. Moreover, recall that for any sequence $\infseqn{Y_n}$ on $(\Omega, \Fcal, \P)$, we have $Y_n \to 0$ with $\P$-probability one if and only if for every $\eps > 0$, $\P[ \supkm |Y_k| \geq \eps ] \to 0$ as $m \nearrow \infty$. Putting these two observations together, we see that $  S_n / n  \to 0$ as $n \nearrow \infty$ with $\P$-probability one as a consequence of \eqref{eq:l1-concentration}.
In fact, \cref{theorem:l1-concentration} can be used to derive the following stronger distribution-uniform generalization of Kolmogorov's SLLN due to \citet{chung_strong_1951}.

\begin{corollary}[Chung's distribution-uniform $L^1$ SLLN \citep{chung_strong_1951}]\label{corollary:chung-slln}
Let $\Pcal \subset \Pcalbar$ be a collection of probability measures for which  $X_1$ is $\Pcal$-uniformly integrable, meaning
\begin{equation}
  \lim_{\xto} \supP \EE_\P \left [ |X_1| \1 \{ |X_1| \geq x \} \right ] = 0.
\end{equation}
Then the SLLN holds uniformly in $\Pcal$, meaning that for any $\eps > 0$,
\begin{equation}\label{eq:chung}
  \lim_\mto \supP \P \left [ \supkm  \frac{|S_k|}{k}  \geq \eps \right ] = 0.
\end{equation}
\end{corollary}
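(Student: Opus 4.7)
The plan is to obtain the corollary as a more-or-less direct consequence of \cref{theorem:l1-concentration}, exploiting the fact that the bound in \eqref{eq:l1-concentration} depends on $\P$ only through the truncated first absolute moment $\U_\P(\cdot)$. Since every $\P \in \Pcal \subset \Pcalbar$ satisfies the hypotheses of that theorem, the key move is simply to take $\supP$ on both sides and then let $m \nearrow \infty$, using $\Pcal$-uniform integrability to control the supremum of the truncated moment.

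Concretely, I would fix any $\lambda \in (0, 1/2)$ and any $\eps > 0$. Applying \cref{theorem:l1-concentration} pointwise in $\P$ gives, for every $m \in \NN$,
\begin{equation*}
  \supP \P \left [ \supkm \frac{|S_k|}{k}  \geq \eps \right ] \leq \frac{262}{\eps^2 \wedge 1} \left ( m^{2\lambda - 1} + \supP \U_\P(m^\lambda) \right ).
\end{equation*}
Because $\lambda < 1/2$, the first term $m^{2\lambda - 1}$ tends to zero as $\mto$. For the second, since $m^\lambda \nearrow \infty$ and the family $\Pcal$ is $\Pcal$-uniformly integrable by hypothesis, $\supP \U_\P(m^\lambda) = \supP \EE_\P[|X_1| \1\{|X_1| \geq m^\lambda\}] \to 0$ as well. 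Adding these two observations yields \eqref{eq:chung}.

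There is no real obstacle here; the whole content is the uniform-in-$\P$ structure of the bound in \cref{theorem:l1-concentration}, which is precisely what allows the distribution-uniform SLLN of \citet{chung_strong_1951} to drop out. The only mild bookkeeping point is to note that $\lambda$ may be chosen once and for all (any value in $(0, 1/2)$ works, since neither the $m^{2\lambda-1}$ nor the $\U_\P(m^\lambda)$ term depends on $\Pcal$ beyond uniform integrability), so no interchange-of-limits subtlety arises.
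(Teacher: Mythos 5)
Your proposal is correct and is precisely the argument the paper intends: take $\supP$ on both sides of \eqref{eq:l1-concentration} with a fixed $\lambda \in (0,1/2)$ (the paper suggests $\lambda = 1/4$), then let $\mto$, using $2\lambda - 1 < 0$ for the first term and $\Pcal$-uniform integrability for $\supP \U_\P(m^\lambda)$. Nothing is missing.
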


In light of the expression in \eqref{eq:l1-concentration} combined with the discussion thereafter, one can observe that Chung's result in \eqref{eq:chung} follows from \cref{theorem:l1-concentration} by setting $\lambda = 1/4$ (say) and taking a supremum over $\P \in \Pcal$ on both sides of \eqref{eq:l1-concentration}. Note that by \citet[Theorem~1]{waudby2024distribution}, $\Pcal$-uniform integrability of $X_1$ is also a necessary condition for \eqref{eq:chung} to hold so \cref{theorem:l1-concentration} is sharp in the sense that it can be used to conclude pointwise and uniform SLLNs without relying on additional assumptions. While $\lambda \in (0, 1/2)$ can be adjusted in the expression of \eqref{eq:l1-concentration} to provide an explicit and potentially sharper rate of convergence for the probability in \eqref{eq:chung} depending on the speed of uniform decay of the truncated first moment $\U_\P$, we do not dwell on such adjustments here since the inequality to be provided shortly in \cref{theorem:lq-concentration} will enjoy exponential (rather than polynomial) dependence on $m$.

Let us now provide a short proof of \cref{theorem:l1-concentration}, which relies on an $L^1$ line-crossing inequality due to \citet[Theorem~4.1]{ruf2022composite}, after which we will move on to the case of random variables in $L^q$ for $q \in [1,2)$ where alternatives to this line-crossing inequality will be developed in a bespoke fashion.

\begin{proof}[Proof of \cref{theorem:l1-concentration}]
  Let $\lambda \in (0, 1/2)$, $\eps > 0$, and $m \in \NN$ be arbitrary. By monotonicity of $\U_\P$,
  \begin{align}
    \P \left [ \sup_{k \geq m}  \frac{|S_k|}{k}   \geq \eps \right ] &\leq \1\left \{ \U_\P(m^\lambda) > \frac{\eps}{4}\right  \} + \P \left [ \supkm  \frac{|S_k|}{k}  \geq  \frac{\eps}{2} + 2\U_\P (m^\lambda) \right ].\label{eq:l1-proof-first-prob}
  \end{align}
  We rely on a line-crossing inequality of
  \citet[Theorem~4.1]{ruf2022composite}, which states that for any $\gamma > 0$,
  \begin{equation}
    \P \left [ \sup_{k \in \NN} \frac{\lvert S_k  \rvert}{k + \gamma}   \geq \eps + \U_\P(x)
    \right ] \leq \frac{8x^2}{\gamma \eps^2} + \left ( \frac{16}{\eps^2} + 2 \right ) \U_\P(x).
  \end{equation}
  Analyzing the second term on the right-hand side of \eqref{eq:l1-proof-first-prob} and applying the above inequality, we see that
  \begin{align}
    \P \left [ \supkm \frac{|S_k|}{k}   \geq  \frac{\eps}{2} + 2\U_\P (m^\lambda) \right ]
    &\leq \P \left [ \sup_{k \in \NN} \frac{\lvert S_k  \rvert}{k + m}   \geq  \frac{\eps}{4} + \U_\P (m^\lambda) \right ]\\
                      \verbose{&\leq \frac{8 m^{2\lambda}}{m \eps^2 / 16} + \left ( \frac{16}{\eps^2/16} + 2 \right ) \U_\P(m^{\lambda})\\}
                     &\leq \frac{128 m^{2\lambda}}{m \eps^2} + \left ( \frac{256}{\eps^2} + 2 \right ) \U_\P(m^{\lambda}),
  \end{align}
  and thus,
  \begin{equation}
    \P \left [ \supkm \frac{ | S_k |}{k}  \geq \eps \right ] \leq \frac{128 m^{2\lambda-1}}{\eps^2} + \left ( \frac{256}{\eps^2} + \frac{4}{\eps} + 2 \right ) \U_\P(m^{\lambda}),
  \end{equation}
  which completes the proof.
\end{proof}

We now present an analogue of \cref{theorem:l1-concentration} with an improved rate of convergence in the spirit of \citet{marcinkiewicz1937fonctions}, who studied  random variables in $L^q$ for any $q \in (1, 2)$. After taking limits, rather than yielding Kolmogorov's and Chung's SLLNs in the distribution-pointwise and uniform cases, respectively, the following result yields the SLLNs of \citet{marcinkiewicz1937fonctions} and \citet{waudby2024distribution}.

\begin{theorem}[$L^q$ concentration for the strong law of large numbers]\label{theorem:lq-concentration}
Fix $\P \in \Pcalbar$, let $q \in [1,2)$, and define the truncated $q^\tth$ absolute moment $\U_\P^\brackq(x)$ of $X_1$ under $\P$ as
  \begin{equation}
    \U_\P^\brackq(x) := \EE_\P \left [ |X_1|^q \1 \{ |X_1|^q \geq x \} \right ];\quad x\geq 0.
  \end{equation}
  For any $\eps > 0$ and $m \in \NN$, we have %
\begin{align}
  \P \left [ \supkm  \frac{|S_k|}{k^{1/q}} \geq \eps \right ] \leq \frac{2\exp \left ( -m^{1/q - 1/2} \right )}{2-q}+ \frac{451}{\eps^2 \wedge 1} \U_\P^\brackq\left(\eps^q \frac{m^{1/2 - q/4}}{38}\right). \label{eq:lq-concentration}
\end{align}
\end{theorem}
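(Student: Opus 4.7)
The plan is a truncation argument that combines two distinct time-uniform concentration bounds, one giving the exponential term and one the polynomial term. Set the truncation level $b > 0$ by $b^q = \eps^q m^{1/2 - q/4}/38$, decompose $X_i = X_i^{\leq} + X_i^{>}$ with $X_i^{\leq} := X_i \1\{|X_i| \leq b\}$, and write $\mu := \EE_\P X_1^{\leq} = -\EE_\P X_1^{>}$. The centered martingales $M_k^{\leq} := \sum_{i=1}^k X_i^{\leq} - k\mu$ and $M_k^{>} := \sum_{i=1}^k X_i^{>} + k\mu$ sum to $S_k$, so by the union bound,
\[
\P\left[\supkm \frac{|S_k|}{k^{1/q}} \geq \eps\right] \leq \P\left[\supkm \frac{|M_k^{\leq}|}{k^{1/q}} \geq \frac{\eps}{2}\right] + \P\left[\supkm \frac{|M_k^{>}|}{k^{1/q}} \geq \frac{\eps}{2}\right].
\]

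For $M_k^{\leq}$, the increments are bounded by $2b$, so Hoeffding's lemma produces the subgaussian exponential supermartingale $\exp(\lambda M_k^{\leq} - 2 \lambda^2 b^2 k)$. I would then stitch via dyadic peeling: partition $\{k \geq m\}$ into $B_j := [2^j m, 2^{j+1} m)$ and, on each $B_j$, apply Ville's inequality with $\lambda_j$ tuned to the smallest normalized threshold $\eps (2^j m)^{1/q}/2$ on that block. Each block contributes at most $2 \exp(-c \eps^2 (2^j m)^{2/q-1}/b^2)$, and since $2/q - 1 > 0$ the per-block exponents grow geometrically in $j$. With $b^q = \eps^q m^{1/2-q/4}/38$ the $j=0$ term equals $2\exp(-m^{1/q-1/2})$, and using $2^{2/q-1} - 1 \geq (2-q)(\ln 2)/2$ for $q \in [1,2)$, the geometric-series factor is bounded by roughly $1/(2-q)$, matching the first term of \eqref{eq:lq-concentration}.

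For $M_k^{>}$, the increments lie only in $L^q$, with $\EE_\P|\Delta M_k^{>}|^q \leq 2^q \U_\P^\brackq(b^q)$. I would apply a time-uniform $L^q$ maximal inequality---either an $L^q$ analogue of the $L^1$ line-crossing inequality of \citet[Theorem~4.1]{ruf2022composite}, or Doob's $L^q$ inequality combined with dyadic peeling and the Bahr--Esseen moment bound $\EE_\P|M_k^{>}|^q \leq 2^{q+1} k\, \U_\P^\brackq(b^q)$. This should give a polynomial bound of the form $C\, \U_\P^\brackq(b^q)/(\eps^2 \wedge 1)$; the $\eps^2 \wedge 1$ factor likely originates from an effective second-moment-style Chebyshev step in the line-crossing inequality that survives in the $L^q$ adaptation, just as the analogous step in \cref{theorem:l1-concentration} produces $1/\eps^2$ despite working with only $L^1$ assumptions.

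The main obstacle is the tail bound for $M_k^{>}$: naively pairing Doob's $L^q$ inequality with dyadic peeling gives per-block estimates that are constant in the block index $j$ and therefore do not sum to a finite quantity over the infinitely many blocks. A bespoke time-uniform $L^q$ maximal inequality adapted to the $k^{1/q}$-normalization is thus required, in line with the paper's earlier remark that ``alternatives to this line-crossing inequality will be developed in a bespoke fashion''. Once such an inequality is established, the remaining work is constant bookkeeping---absorbing the numerical multipliers from Hoeffding's lemma, Ville's inequality, the geometric-series factor, Doob's inequality, and the Bahr--Esseen constant into the overall $38$ and $451$.
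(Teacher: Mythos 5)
Your treatment of the bounded part is essentially the paper's Step I (the paper invokes the stitched inequality of \citet[Theorem~1]{howard2018uniform} rather than hand-rolled dyadic peeling, but the mechanism and the resulting exponential term are the same). The genuine gap is exactly where you flag it: the heavy part $M_k^{>}$ of a \emph{single-level} truncation cannot be controlled by any off-the-shelf $L^q$ maximal inequality. Doob's $L^q$ inequality plus von Bahr--Esseen gives per-block bounds of order $\eps^{-q}\,\U_\P^\brackq(b^q)$ that are constant in the block index, so the peeling sum diverges, as you note; a weighted (H\'ajek--R\'enyi-type) version in $L^q$ would require $\sum_i \EE_\P|X_i^{>}|^q/i<\infty$, which also fails since the summands do not decay; and at $q=1$ Doob's inequality is unavailable altogether. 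Deferring this to ``a bespoke time-uniform $L^q$ maximal inequality'' is not constant bookkeeping --- it is the entire content of the theorem, and no such inequality exists for a fixed truncation level.

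The paper's resolution is a \emph{three}-part decomposition with a second, index-dependent truncation: beyond the fixed level $\eps m^{\lambda/q}$ it further splits each $X_i$ at $(i-1)^{1/q}$. The middle piece $Z_i$ is then square-integrable with $\EE_\P Z_i^2\leq \EE_\P[X_1^2\1\{\eps m^{\lambda/q}<|X_1|\leq (i-1)^{1/q}\}]$, and the weighted series $\sum_i \EE_\P Z_i^2/i^{2/q}$ telescopes (via an integral comparison) to exactly $\U_\P^\brackq(\eps^q m^\lambda)$; a summation-by-parts maximal lemma (\cref{lemma:maximal weighted sum inequality}) converts control of $\max_k|\sum_{i\leq k}Z_i/i^{1/q}|$ into control of $\max_k k^{-1/q}|\sum_{i\leq k}Z_i|$, so Kolmogorov's $L^2$ maximal inequality applies and yields the $4\eps^{-2}\U_\P^\brackq$ term --- this is also the true origin of the $\eps^2\wedge 1$ factor, not a Chebyshev step inherited from \cref{theorem:l1-concentration}. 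The outermost piece $R_i$ is killed by a Borel--Cantelli-style union bound, $\sum_i\P[|X_1|^q>\eps^q m^\lambda\vee(i-1)]\leq (1+\eps^{-q})\U_\P^\brackq(\eps^q m^\lambda)$, together with a deterministic bound on its accumulated means. In short, the index-dependent truncation is the missing idea that upgrades the increments to $L^2$ where a usable time-uniform maximal inequality exists; without it your two-part scheme cannot be completed.
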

The proof of \cref{theorem:lq-concentration} can be found in \cref{proof:lq-concentration}. Notice that \cref{theorem:lq-concentration} may be interpreted as an improvement over \cref{theorem:l1-concentration} even in the case $q=1$ since the decay of the first term in the right-hand side of \eqref{eq:lq-concentration} is exponential in $m$ while that of \eqref{eq:l1-concentration} is only polynomial. Using a line of reasoning similar to the one that followed \cref{theorem:l1-concentration}, one can use \cref{theorem:lq-concentration} to deduce the SLLNs of Kolmogorov and \citet{marcinkiewicz1937fonctions}. We state their result and provide a short proof here.

\begin{corollary}[Marcinkiewicz-Zygmund strong laws of large numbers \citep{marcinkiewicz1937fonctions}]\label{corollary:mz-slln}
  Let $\P \in \Pcalbar$ and $q \in [1,2)$. If $\EE_\P|X_1|^q < \infty$, then
    $S_n/n = o(n^{1/q - 1})$ with $\P$-probability one. 
\end{corollary}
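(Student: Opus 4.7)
The plan is to derive the Marcinkiewicz--Zygmund SLLN as a direct application of \cref{theorem:lq-concentration}, in complete parallel to how Kolmogorov's SLLN was extracted from \cref{theorem:l1-concentration}. First, I would observe that the conclusion $S_n / n = o(n^{1/q - 1})$ with $\P$-probability one is the same as $S_n / n^{1/q} \to 0$ with $\P$-probability one, since $n \cdot n^{1/q - 1} = n^{1/q}$. I would then invoke the standard characterization of almost-sure convergence to zero of a stochastic process $\infseqn{Y_n}$: namely that $Y_n \to 0$ with $\P$-probability one if and only if $\lim_{\mto} \P[\sup_{k \geq m} |Y_k| \geq \eps] = 0$ for every $\eps > 0$. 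Applied to $Y_k := S_k / k^{1/q}$, it thus suffices to show that the right-hand side of \eqref{eq:lq-concentration} tends to zero as $\mto$ for every fixed $\eps > 0$.

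Both terms on the right-hand side of \eqref{eq:lq-concentration} vanish because the exponents $1/q - 1/2$ and $1/2 - q/4$ are strictly positive when $q \in [1, 2)$. Specifically, since $1/q - 1/2 > 0$, the first summand $2 (2-q)^{-1} \exp(-m^{1/q - 1/2})$ vanishes as $\mto$. For the second summand, since $1/2 - q/4 > 0$, the truncation level $\eps^q m^{1/2 - q/4}/38$ tends to infinity with $m$; the assumption $\EE_\P |X_1|^q < \infty$ together with the dominated convergence theorem then yields $\U_\P^\brackq(x) \to 0$ as $\xto$, so the second summand vanishes as well. Combining both observations with \eqref{eq:lq-concentration} and the almost-sure convergence characterization above completes the proof. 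There is no real obstacle here, as the work has already been done inside \cref{theorem:lq-concentration}; the only point to verify is that the two exponents on $m$ remain strictly positive throughout the range $q \in [1, 2)$, which is where the restriction $q < 2$ is used in an essential way.
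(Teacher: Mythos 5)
Your proposal is correct and follows exactly the paper's own argument: apply \cref{theorem:lq-concentration}, note that both terms on the right-hand side of \eqref{eq:lq-concentration} vanish as $\mto$ (the first because $1/q - 1/2 > 0$, the second because $\EE_\P|X_1|^q < \infty$ forces $\U_\P^\brackq(x) \tozero$ as $\xto$), and conclude via the standard characterization of almost-sure convergence. The only difference is that you spell out a few routine verifications the paper leaves implicit.
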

\begin{proof}
  Observe that if $\EE_\P|X_1|^q < \infty$, then for any $\eps > 0$, $\U_\P^\brackq (\eps^q m^{1/2 - q/4}/38) \tozero$ as $\mto$. Applying \cref{theorem:lq-concentration}, we have for any $\eps > 0$, $\P[\supkm k^{-1/q} |S_k| \geq \eps] \tozero$ as $\mto$, which completes the proof.
\end{proof}
In fact, the following distribution-uniform generalization of the Marcinkiewicz-Zygmund SLLN is an immediate corollary of \cref{theorem:lq-concentration} in the same way that Chung's SLLN is immediate from \cref{theorem:l1-concentration}. 
\begin{corollary}[Distribution-uniform $L^q$ SLLNs for $q \in [1, 2)$ \citep{chung_strong_1951,waudby2024distribution}]\label{corollary:wlr-slln}
Let $q \in [1,2)$ and let $\Pcal \subset \Pcalbar$ be a family of probability measures for which the random variable $X$ has a $\Pcal$-uniformly integrable $q^\tth$ moment, meaning
\begin{equation}
  \lim_{\xto} \supP \EE_\P \left [ |X_1|^q \1 \{ |X_1|^q \geq x \} \right ] = 0.
\end{equation}
The SLLN holds uniformly in $\Pcal$ at a rate of $o(n^{1/q-1})$ meaning that for any $\eps > 0$,
\begin{equation}\label{eq:wsetal}
  \lim_\mto \supP \P \left [ \supkm  \frac{|S_k|}{k^{1/q}}  \geq \eps \right ] = 0.
\end{equation}
\end{corollary}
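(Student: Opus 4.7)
The plan is to obtain \eqref{eq:wsetal} by taking the supremum over $\P \in \Pcal$ on both sides of the inequality \eqref{eq:lq-concentration} in \cref{theorem:lq-concentration} and then sending $m \nearrow \infty$. Since the statement is essentially a uniform version of \cref{corollary:mz-slln}, the work is entirely in checking that both terms on the right-hand side of \eqref{eq:lq-concentration} vanish at a rate that does not depend on $\P$.

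First I would fix $\eps > 0$ and $q \in [1,2)$ and observe that the first term $2 \exp(-m^{1/q - 1/2}) / (2-q)$ is free of $\P$, so the supremum over $\Pcal$ leaves it unchanged. For $q \in [1,2)$ we have $1/q - 1/2 \in (0, 1/2]$, so $m^{1/q-1/2} \nearrow \infty$, and consequently this term tends to $0$ as $\mto$.

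Next I would turn to the second term, where the only $\P$-dependence is through the truncated $q^\tth$ absolute moment $\U_\P^\brackq(\eps^q m^{1/2 - q/4}/38)$. Since $q \in [1,2)$ gives $1/2 - q/4 > 0$, the truncation level $x_m := \eps^q m^{1/2 - q/4}/38$ satisfies $x_m \nearrow \infty$ as $\mto$. By the assumed $\Pcal$-uniform integrability of $|X_1|^q$,
\begin{equation}
  \supP \U_\P^\brackq(x_m) = \supP \EE_\P \left [ |X_1|^q \1 \{ |X_1|^q \geq x_m \} \right ] \longrightarrow 0
\end{equation}
as $\mto$. Combining the two vanishing terms, the supremum over $\Pcal$ of the right-hand side of \eqref{eq:lq-concentration} tends to zero, which yields \eqref{eq:wsetal}.

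There is no real obstacle here beyond checking the sign of the exponents $1/q - 1/2$ and $1/2 - q/4$ on the interval $q \in [1,2)$; both are strictly positive precisely because $q < 2$, which is exactly the range in which the Marcinkiewicz-Zygmund rate is meaningful. This parallels the way \cref{corollary:chung-slln} was deduced from \cref{theorem:l1-concentration}, but now with exponential-in-$m$ decay of the $\P$-free term rather than polynomial decay.
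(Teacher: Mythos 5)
Your argument is correct and is precisely the paper's own (very brief) justification: take the supremum over $\Pcal$ and the limit as $\mto$ on both sides of \eqref{eq:lq-concentration}, noting that the first term is $\P$-free and vanishes because $1/q-1/2>0$, while the second vanishes by $\Pcal$-uniform integrability since the truncation level $\eps^q m^{1/2-q/4}/38$ diverges for $q<2$. Nothing is missing.
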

It is easy to see how \cref{corollary:wlr-slln} follows from \cref{theorem:lq-concentration} by taking a supremum over $\Pcal$ and a limit as $\mto$ on both the left-hand and right-hand sides of the inequality \eqref{eq:lq-concentration}. Again, note that by \citet[Theorem 1]{waudby2024distribution}, uniform integrability of the $q^\tth$ moment is necessary and sufficient to conclude \eqref{eq:wsetal} so \cref{theorem:lq-concentration} is sharp in the sense that it yields pointwise and uniform Marcinkiewicz-Zygmund-type SLLNs under the same moment assumptions.

In the following section, we consider the case of $q=2$ and arrive at an iterated logarithm inequality that can be used to derive the upper bound in Kolmogorov's law of the iterated logarithm.

\section{Iterated logarithm inequalities with finite variances}\label{section:lil}
While the former sections provided inequalities that can be used to deduce that $n^{-1}S_n  = o(n^{1/q-1})$ with $\P$-probability one for $q \in [1,2)$, such a deduction is not possible when $q = 2$ as exemplified by the law of the iterated logarithm which states that if $\sigma_\P^2 := \Var_\P[X_1] < \infty$, then
\begin{equation}\label{eq:kolmogorov-lil}
  \limsup_{\ntoinfty}\frac{S_n / \sigma_\P}{\sqrt{2 n \log \log n}}  = 1\qquad\text{with $\P$-probability one.}
\end{equation}
Juxtaposing the non-asymptotic results of the previous section with the asymptotic statement in \eqref{eq:kolmogorov-lil} naturally motivates the question: \emph{Do there exist non-asymptotic time-uniform concentration inequalities for sums of random variables with iterated logarithm rates of convergence?} It appears that this question was first posed and partially solved by \citet{darling1967iterated,darling1968some}. A simplified version of \citet[Eq.~(22)]{darling1967iterated} states that if $\infseqn{X_n}$ are \iid{} and $\widebar \sigma$-sub-Gaussian, meaning that
\begin{equation}
  \EE_\P \left [ e^{t |X_1| } \right ] \leq e^{ t^2 \sigmasqub / 2 };
  \qquad t \in \RR,
\end{equation}
then for any $m \in \NN \setminus \{1\}$ and any $\eps > 0$,  
\begin{equation}\label{eq:darling-robbins-lil}
  \P \left [ \supkm \frac{|S_k / \widebar \sigma|}{\sqrt{k (2(1+\eps)^2 \log \log k + 2(1+\eps) \log 2 )}} \geq 1 \right ] \leq \frac{\log_{1+\eps}^{-\eps}(m)}{\eps }.
\end{equation}
It is easy to check that the above implies that $\limsup_{\ntoinfty} |S_n/\sigma| / \sqrt{2 \widebar n \log \log n} \leq 1$ with $\P$-probability one, resembling the behavior of \eqref{eq:kolmogorov-lil} but with $\sigma_\P^2$ replaced by the sub-Gaussian variance proxy $\widebar \sigma^2$. 

Several other iterated logarithm inequalities exist --- sometimes referred to as ``finite laws of the iterated logarithm'' --- such as in \citet{darling1968some,balsubramani2014sharp,jamieson2014best,kaufmann2016complexity,zhao2016adaptive}, and \citet{howard_exponential_2018}. What all of these iterated logarithm inequalities including those of \citet{darling1967iterated,darling1968some} have in common, however, is that they require the underlying random variables to be sub-Gaussian or at the very least, have finite moment generating functions. Meanwhile, the law of the iterated logarithm in \eqref{eq:kolmogorov-lil} is only a statement about random variables in $L^2$ that need not have any finite moment higher than a variance.
A partial exception to this rule exists in \citet{howard_exponential_2018}, where the authors derive a sub-Gaussian iterated logarithm inequality with a variance proxy taking the form of a convex combination of bounds on the variance and the squared random variables themselves. We will use this inequality in our proof of \cref{theorem:lil}.
Nevertheless, the aforementioned iterated logarithm inequalities do not directly yield the upper bound in the LIL under only a finite second moment assumption nor do they yield distribution-uniform generalizations of Kolmogorov's LIL. The following inequalities will serve precisely this purpose.

\begin{theorem}[An $L^2$ iterated logarithm inequality]\label{theorem:lil}
  Fix $\P \in \Pcalbar$, let $\widebar \sigma \in (0, \infty)$, and assume that $X_1$ has finite variance $\Var_\P[X_1] \leq \sigmasqub$. Define
  \begin{equation}
    \widebar \U_\P^\bracktwo(x) := \EE_\P[|X_1^2 - \sigma_\P^2| \1 \{ |X_1^2 - \sigma_\P^2| \geq x \}];\ x \geq 0.
  \end{equation}
Then for any $\eps > 0$, $\lambda \in (0, 1/2)$, and $m \in \NN \setminus \{1\}$, we have  
\begin{align}
  &\P \left [ \supkm  \frac{ |S_k/ \widebar \sigma|}{c_\eps \sqrt{k  \left ( \log \log ( (1 + \eps )^2 k) + \ell_\eps \right )}} 
  \geq 1 \right ] \leq \frac{\log_{1+\eps}^{-\eps}(m/3)}{\eps \zeta(1+\eps) } + \frac{262}{(\eps^2 \widebar \sigma^4) \land 1} \left ( m^{2\lambda - 1} + \widebar \U_\P^\bracktwo(m^\lambda) \right ),
\end{align}
  where $c_\eps := ( (1+\eps)^{5/4} + (1+\eps)^{3/4} ) / \sqrt{2}$, $\ell_\eps := \log(2 \zeta(1+\eps) / \log(1 + \eps))$, and the function $\zeta$ is the Riemann zeta function, given by $\zeta(z) := \sum_{j=1}^\infty j^{-z};\ z > 1$. 
\end{theorem}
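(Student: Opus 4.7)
The plan is to decompose the event of interest along the behavior of the empirical second-moment process $\sum_{i=1}^k X_i^2$. On the ``good'' event where this process remains within a multiplicative factor $1+\eps$ of its mean $k \sigma_\P^2$ for every $k \geq m$, I would invoke the iterated logarithm inequality of \citet{howard_exponential_2018}, which --- as noted in the text preceding the statement --- applies to processes whose variance proxy is a convex combination of an a priori bound and the observed $X_i^2$'s. On the complementary ``bad'' event I would apply \cref{theorem:l1-concentration} to the i.i.d.\ mean-zero sequence $Y_i := X_i^2 - \sigma_\P^2$, whose truncated first absolute moment is, by construction, $\widebar \U_\P^\bracktwo$.

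\textbf{The good event.} Introduce
\begin{equation}
A_m := \left\{ \supkm \frac{1}{k} \left| \sum_{i=1}^k (X_i^2 - \sigma_\P^2) \right| \leq \eps \widebar \sigma^2 \right\}.
\end{equation}
On $A_m$ we have $\sum_{i=1}^k X_i^2 \leq k \sigma_\P^2 + \eps k \widebar \sigma^2 \leq (1+\eps) k \widebar \sigma^2$ for every $k \geq m$, using $\sigma_\P^2 \leq \widebar \sigma^2$. Feeding this bound into Howard et al.'s variance proxy and applying their stitched uniform boundary with geometric grid ratio and polynomial order both set to $1+\eps$ produces a crossing boundary of the form $c_\eps \widebar \sigma \sqrt{k(\log\log((1+\eps)^2 k) + \ell_\eps)}$, valid on $A_m$ for every $k \geq m$, with failure probability equal to the first summand $\log_{1+\eps}^{-\eps}(m/3) / (\eps \zeta(1+\eps))$. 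The $\zeta(1+\eps)$ appears because the error budget is distributed across geometric epochs via $\sum_{j=1}^\infty j^{-(1+\eps)} = \zeta(1+\eps)$; the $(1+\eps)^2$ inside the $\log\log$, the $m/3$ in place of $m$, and the mixed $(1+\eps)^{5/4}$, $(1+\eps)^{3/4}$ factors in $c_\eps$ collectively reflect the coarseness of the geometric grid together with the $\sqrt{1+\eps}$ slack from the good-event bound on the quadratic variation.

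\textbf{The bad event and conclusion.} The sequence $(Y_i)_{i \in \NN}$ is i.i.d., $\P$-integrable, and mean-zero (so its law lies in $\Pcalbar$), and it satisfies $\EE_\P[|Y_1| \1\{|Y_1| \geq x\}] = \widebar \U_\P^\bracktwo(x)$. Applying \cref{theorem:l1-concentration} to $(Y_i)_{i \in \NN}$ at threshold $\eps \widebar \sigma^2$, with the parameter $\lambda \in (0,1/2)$ carried over unchanged, yields
\begin{equation}
\P[A_m^c] \leq \frac{262}{(\eps \widebar \sigma^2)^2 \wedge 1} \left( m^{2\lambda - 1} + \widebar \U_\P^\bracktwo(m^\lambda) \right) = \frac{262}{(\eps^2 \widebar \sigma^4) \wedge 1} \left( m^{2\lambda - 1} + \widebar \U_\P^\bracktwo(m^\lambda) \right),
\end{equation}
which is precisely the second summand in the target bound. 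A union bound over $A_m$ and $A_m^c$ then delivers the claimed inequality.

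\textbf{Anticipated obstacle.} The chief technical work is calibrating Howard et al.'s stitching parameters --- the convex-combination coefficient in their variance proxy, the geometric grid ratio, and the polynomial order --- so that the constants $c_\eps$ and $\ell_\eps$ emerge exactly as stated, and verifying that the resulting boundary is in force for \emph{every} $k \geq m$ rather than merely along the grid. The composition of the $\sqrt{1+\eps}$ slack inherited from the good-event bound $V_k \leq (1+\eps) k \widebar \sigma^2$ with the additional polynomial-stitching factors is what produces the unusual exponents $5/4$ and $3/4$ in $c_\eps$, and tracking these carefully is the one place where genuine care (as opposed to bookkeeping) is required.
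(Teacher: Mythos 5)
Your proposal matches the paper's proof in all essentials: the same good/bad event decomposition on the empirical second-moment process, the same application of \cref{theorem:l1-concentration} to $X_i^2 - \sigma_\P^2$ (yielding exactly the second summand), and the same use of the stitched boundary of \citet{howard_exponential_2018} with the convex-combination variance proxy on the good event. The only detail you leave implicit is carried out in the paper by taking the proxy $V_k = \tfrac{1}{3}\sum_{i=1}^k(X_i^2 + 2\sigmasqub)$, whose deterministic lower bound $V_k \geq \tfrac{2}{3}k\sigmasqub$ converts $k \geq m$ into $V_k \geq \tfrac{2}{3}m\sigmasqub$ (whence the $m/3$), and whose good-event upper bound $V_k \leq (1+\eps)k\sigmasqub$ combines with the grid ratio $\eta = 1+\eps$ to give $c_\eps = (1+\eps)\cdot(\eta^{1/4}+\eta^{-1/4})/\sqrt{2}$, exactly as you anticipated.
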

The proof of \cref{theorem:lil} can be found in \cref{proof:lil}.
Applying \cref{theorem:l1-concentration,theorem:lil} together, we have the following studentized analogue of the above inequality with $\sigmasqub$ replaced by the sample variance.

\begin{corollary}[A studentized $L^2$ iterated logarithm inequality]\label{corollary:self-normalized-lil}
  Fix $\P \in \Pcalbar$ and assume that $X_1$ has finite variance $\sigma_\P^2 := \Var_\P[X_1] < \infty$.
Define the truncated normalized second moment $\widehat \U_\P^\bracktwo(x)$  by 
  \begin{equation}\label{eq:studentized second moment}
    \widehat \U_\P^\bracktwo(x) := \EE_\P \left [ \frac{X_1^2}{\sigma_\P^2} \1 \left \{ \frac{X_1^2}{\sigma_\P^2} \geq x \right \} \right ];\quad x \geq 0,
  \end{equation}
  and for each $n \in \NN$ the sample variance $\widehat \sigma_n^2$  by
  \begin{equation}
    \widehat \sigma_n^2 := \frac{1}{n} \sum_{i=1}^n (X_i - \widehat \mu_n)^2,\quad\text{where}\quad \widehat \mu_n := \frac{1}{n} \sum_{i=1}^n X_i.
  \end{equation}
  Then for any $\eps > 0$, $\lambda \in (0,1/2)$, and $m \in \NN \setminus\{1\}$,
\begin{align}
  &\P \left [ \supkm  \frac{| S_k/ \widehat \sigma_k|}{c_\eps \sqrt{ (1+2\eps) k  \left ( \log \log ( (1 + \eps )^2 k) + \ell_\eps \right )}} 
  \geq 1 \right ] \leq \frac{\log_{1+\eps}^{-\eps}(2m/3)}{\eps \zeta(1+\eps)} + \frac{786}{\eps^2 \land 1} \left ( m^{2\lambda-1} + \widehat \U_\P^\bracktwo(m^\lambda) \right ),
\end{align}
where $c_\eps$ and $\ell_\eps$ are as in \cref{theorem:lil}.
\end{corollary}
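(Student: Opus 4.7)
The plan is to reduce the studentized iterated logarithm inequality to the non-studentized \cref{theorem:lil} via a ``good event'' on which the sample variance is bounded below, and then to control the failure of that event using two further applications of \cref{theorem:l1-concentration}. This accounts for the constant $786 = 3 \times 262$: one copy from \cref{theorem:lil} itself and one from each of the two additional concentration bounds. By homogeneity of $|S_k|/\widehat\sigma_k$ and of $\widehat\U_\P^\bracktwo(\cdot)$ under the rescaling $X_i \mapsto X_i/\sigma_\P$, we may assume throughout that $\sigma_\P^2 = 1$, so that $\EE_\P[X_1^2] = 1$ and $X_1^2 - 1$ is a mean-zero, integrable random variable.

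Using the identity $\widehat\sigma_k^2 = V_k - \widehat\mu_k^2$ with $V_k := k^{-1}\sum_{i=1}^k X_i^2$, we define the good event
\begin{equation*}
G := \bigl\{ \widehat\sigma_k^2 \geq (1+2\eps)^{-1} \text{ for all } k \geq m' \bigr\},
\end{equation*}
where $m' \in [m, \infty)$ is chosen so that applying \cref{theorem:lil} at starting index $m'$ produces the first summand $\log_{1+\eps}^{-\eps}(2m/3)/(\eps \zeta(1+\eps))$ in the bound. On $G$ and for any $k \geq m'$, we have $|S_k|/\widehat\sigma_k \leq \sqrt{1+2\eps}\,|S_k|$, so the event inside the probability in the corollary's statement forces $|S_k| \geq c_\eps\sqrt{k(\log\log((1+\eps)^2 k) + \ell_\eps)}$, which \cref{theorem:lil} (applied with $\widebar\sigma = 1$) directly bounds. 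For the complement, the elementary implication that $V_k \geq 1 - \tfrac{\eps}{1+2\eps}$ together with $\widehat\mu_k^2 \leq \tfrac{\eps}{1+2\eps}$ implies $\widehat\sigma_k^2 \geq (1+2\eps)^{-1}$ yields
\begin{equation*}
G^c \subseteq \Bigl\{ \sup_{k \geq m'} |V_k - 1| \geq \tfrac{\eps}{1+2\eps} \Bigr\} \cup \Bigl\{ \sup_{k \geq m'} |\widehat\mu_k| \geq \sqrt{\tfrac{\eps}{1+2\eps}} \Bigr\},
\end{equation*}
and each of the two events on the right is bounded by \cref{theorem:l1-concentration} applied respectively to the mean-zero integrable sequences $\infseqn{X_n^2 - 1}$ and $\infseqn{X_n}$.

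The main work will be the bookkeeping to match the two $L^1$-truncated moments that arise, namely $\EE_\P[|X_1^2-1|\1\{|X_1^2-1|\geq m^\lambda\}]$ and $\EE_\P[|X_1|\1\{|X_1|\geq m^\lambda\}]$, to the single studentized form $\widehat\U_\P^\bracktwo(m^\lambda) = \EE_\P[X_1^2 \1\{X_1^2 \geq m^\lambda\}]$. For $m$ large enough that $m^\lambda \geq 1$, the first is dominated by $\widehat\U_\P^\bracktwo(m^\lambda)$ via the pointwise identity $|X_1^2-1|\1\{|X_1^2-1|\geq m^\lambda\} = (X_1^2-1)\1\{X_1^2 \geq m^\lambda + 1\} \leq X_1^2 \1\{X_1^2 \geq m^\lambda\}$; the second by $|X_1|\1\{|X_1|\geq m^\lambda\} \leq X_1^2/m^\lambda$. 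Assembling the three contributions, with the denominators of the form $(\eps/(1+2\eps))^2 \wedge 1$ absorbed into the constant $786$ and the starting index $m'$ tracked through \cref{theorem:lil}, yields the bound as stated.
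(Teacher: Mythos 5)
Your overall strategy is the same as the paper's: apply \cref{theorem:lil} to the normalized variables with $\widebar \sigma = 1$, intersect with a good event on which $\sigma_\P/\widehat\sigma_k \leq \sqrt{1+2\eps}$ for all relevant $k$, and bound the complement by two applications of \cref{theorem:l1-concentration}, to $\infseqn{X_n/\sigma_\P}$ and to $\infseqn{X_n^2/\sigma_\P^2 - 1}$; your truncated-moment bookkeeping relating $\EE_\P[|X_1^2-1|\1\{|X_1^2-1|\geq m^\lambda\}]$ and $\EE_\P[|X_1|\1\{|X_1|\geq m^\lambda\}]$ to $\widehat\U_\P^\bracktwo(m^\lambda)$ is correct. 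However, two steps do not deliver the inequality with the constants as stated. First, the device of a starting index $m' \in [m,\infty)$ ``chosen so that \cref{theorem:lil} produces the first summand $\log_{1+\eps}^{-\eps}(2m/3)/(\eps\zeta(1+\eps))$'' cannot work as a black-box application: the theorem applied at index $m'$ yields $\log_{1+\eps}^{-\eps}(m'/3)$, so matching $2m/3$ forces $m' = 2m > m$, leaving the supremum over $k \in \{m,\dots,2m-1\}$ uncontrolled, while taking $m' = m$ restores coverage but produces the strictly larger term $\log_{1+\eps}^{-\eps}(m/3)$. The paper resolves this by invoking the bound actually established inside the proof of \cref{theorem:lil}, which already carries the $2m/3$ when applied at index $m$.

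Second, the denominators $(\eps/(1+2\eps))^2 \wedge 1$ cannot be ``absorbed into the constant $786$.'' Since $\eps/(1+2\eps) < 1/2$ for every $\eps > 0$, your application of \cref{theorem:l1-concentration} to $\infseqn{X_n^2-1}$ at threshold $\eps/(1+2\eps)$ produces the factor $262(1+2\eps)^2/\eps^2$, which tends to $1048$ as $\eps \toinfty$ and already exceeds $786/(\eps^2 \wedge 1) = 786$ there, before the other two contributions are added. Your argument therefore proves the corollary with a larger absolute constant (roughly $13 \cdot 262$ in place of $786 = 3 \cdot 262$), not the one stated. The paper attains $3\cdot 262$ by running the two auxiliary bounds at the undeflated thresholds $\eps$ and $\sqrt{\eps}$ and then arguing that the resulting event yields $\sigma_\P/\widehat\sigma_k < \sqrt{1+2\eps}$. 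To your credit, your deflated thresholds are exactly what makes the implication $\widehat\sigma_k^2 \geq \sigma_\P^2/(1+2\eps)$ airtight (the undeflated ones only give $\widehat\sigma_k^2 > (1-2\eps)\sigma_\P^2$, which is weaker), so you are trading constants for rigor; but as written the proposal does not establish the bound in the statement.
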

The proof of \cref{corollary:self-normalized-lil} can be found after that of \cref{theorem:lq-concentration} in \cref{proof:lil}.
As applications of \cref{theorem:lil} and \cref{corollary:self-normalized-lil}, we have the following distribution-uniform generalizations of the upper bound in Kolmogorov's law of the iterated logarithm, which seem to be new to the literature.
\begin{corollary}[Distribution-uniform laws of the iterated logarithm]\label{corollary:uniform-lil}
Let $\Pcal \subset \Pcalbar$. If the second moment of $X_1$ is $\Pcal$-uniformly integrable, i.e.~$\sup_\Pin \U_\P^\bracktwo(x) \searrow 0$ as $\xto$, then there exists  $\widebar \sigma \in (0, \infty)$ so that $\sup_\Pin \Var_\P[X_1] \leq \widebar \sigma^2$ and  the law of the iterated logarithm holds uniformly in $\Pcal$, meaning that for any $\delta > 0$ we have
  \begin{equation}\label{eq:uniform-lil-variance}
    \lim_\mto \supP \P \left [ \supkm  \frac{|S_k / \sigmaub| }{\sqrt{2  k \log \log k}}  \geq 1 + \delta
    \right ] = 0.
  \end{equation}
 Furthermore, if the normalized random variable $X_1 / \sigma_\P$ has a $\Pcal$-uniformly integrable second moment, i.e.~$\sup_\Pin \widehat \U_\P^\bracktwo(x) \searrow 0$ as $\xto$, where $\widehat \U_\P^\bracktwo(x)$ is defined in \eqref{eq:studentized second moment}, then for any $\delta > 0$,
\begin{equation}\label{eq:uniform-lil-sn}
    \lim_\mto \supP \P \left [ \supkm  \frac{|S_k / \widehat \sigma_k |}{\sqrt{2  k \log \log k}}  \geq 1 + \delta
    \right ] = 0.
  \end{equation}
\end{corollary}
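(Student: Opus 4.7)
My plan is to derive \eqref{eq:uniform-lil-variance} from \cref{theorem:lil} and \eqref{eq:uniform-lil-sn} from \cref{corollary:self-normalized-lil}. In both cases the argument has the same shape: choose $\eps$ small enough that the deterministic LIL-envelope appearing on the left-hand side of those inequalities is eventually dominated by $(1+\delta)\sqrt{2k \log\log k}$, then control the truncated tails on the right-hand sides uniformly in $\P \in \Pcal$, and finally send $\mto$ with $\lambda \in (0,1/2)$ fixed.

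I would begin by extracting the constant $\widebar \sigma^2$. $\Pcal$-uniform integrability of $X_1^2$ says $\sup_\Pin \U_\P^\bracktwo(x) \tozero$ as $\xto$; in particular there is some $x_0$ with $\sup_\Pin \U_\P^\bracktwo(x_0) \leq 1$, so $\sup_\Pin \EE_\P[X_1^2] \leq x_0 + 1$ and $\widebar\sigma^2 := \sup_\Pin \Var_\P[X_1]$ is finite. Next I would verify that $\sup_\Pin \widebar\U_\P^\bracktwo(x) \tozero$ as $\xto$, since this is the truncated tail that actually appears in \cref{theorem:lil}. The key observation is that for $x > \widebar\sigma^2 \geq \sigma_\P^2$ the event $\{|X_1^2 - \sigma_\P^2| \geq x\}$ reduces to $\{X_1^2 \geq x + \sigma_\P^2\} \subset \{X_1^2 \geq x\}$; therefore, by Markov's inequality applied to $\P[X_1^2 \geq x]$,
\begin{equation*}
\widebar\U_\P^\bracktwo(x) \leq \EE_\P\bigl[(X_1^2 + \sigma_\P^2)\1\{X_1^2 \geq x\}\bigr] \leq \U_\P^\bracktwo(x) + \frac{\widebar\sigma^2}{x}\,\U_\P^\bracktwo(x).
\end{equation*}
Taking $\sup_\Pin$ and $\xto$ yields the desired uniform decay.

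With $\widebar\sigma^2$ and this uniform decay in hand, what remains is a calibration of constants. Since $c_\eps \to \sqrt{2}$ as $\eps \tozero$ and $(\log\log((1+\eps)^2 k) + \ell_\eps)/\log\log k \to 1$ as $\kto$ for each fixed $\eps$, given $\delta > 0$ I can choose $\eps$ small and $m_0$ large so that
\begin{equation*}
c_\eps \sqrt{k \bigl(\log\log((1+\eps)^2 k) + \ell_\eps\bigr)} \leq (1 + \delta)\sqrt{2k\log\log k} \quad \text{for every } k \geq m_0.
\end{equation*}
For $m \geq m_0$ the event in \eqref{eq:uniform-lil-variance} is then contained in the one bounded by \cref{theorem:lil}. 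Taking $\sup_\Pin$ of that bound and letting $\mto$, the polynomial term $m^{2\lambda-1}$, the deterministic logarithmic term, and $\sup_\Pin \widebar\U_\P^\bracktwo(m^\lambda)$ all vanish, giving \eqref{eq:uniform-lil-variance}. The studentized statement \eqref{eq:uniform-lil-sn} follows from \cref{corollary:self-normalized-lil} by exactly the same calibration, using $c_\eps \sqrt{1+2\eps} \to \sqrt{2}$ as $\eps \tozero$; the uniform decay of $\widehat\U_\P^\bracktwo$ is part of the hypothesis, so no intermediate bounding step is needed.

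The main obstacle I anticipate is the transfer from the assumed uniform integrability of $X_1^2$ to the uniform decay of $\widebar\U_\P^\bracktwo$, i.e.\ the second paragraph above; the rest is a routine calibration of constants in the LIL envelope.
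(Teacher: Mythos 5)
Your proposal is correct and follows exactly the route the paper intends: \cref{corollary:uniform-lil} is presented there as an immediate application of \cref{theorem:lil} and \cref{corollary:self-normalized-lil}, with the $\eps$-calibration of $c_\eps$ and the envelope being routine. You also correctly identify and close the one step the paper leaves implicit, namely that $\Pcal$-uniform integrability of $X_1^2$ yields both a finite $\widebar\sigma^2$ and the uniform decay $\sup_{\P\in\Pcal}\widebar\U_\P^\bracktwo(x)\searrow 0$ via $\widebar\U_\P^\bracktwo(x)\leq(1+\widebar\sigma^2/x)\,\U_\P^\bracktwo(x)$ for $x>\widebar\sigma^2$.
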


Clearly, \eqref{eq:uniform-lil-variance} implies the upper bound of Kolmogorov's LIL in \eqref{eq:kolmogorov-lil} since taking $\Pcal = \{ \P \}$ and $\widebar \sigma = \sigma_\P$ for any distribution $\P$ for which $\sigma_\P^2 < \infty$ yields for any $\delta > 0$,
\begin{equation}
  \P \left [ \limsup_\ntoinfty  \frac{|S_n / \sigma_\P|}{\sqrt{2 n \log \log n}}  \geq 1 + \delta \right ] = \lim_\mto \P \left [ \supkm \frac{|S_k / \sigma_\P|}{\sqrt{2 k \log \log k}}  \geq 1 + \delta \right ] = 0.
\end{equation}
Nevertheless, \cref{corollary:uniform-lil} (and by extension, \cref{theorem:lil} and \cref{corollary:self-normalized-lil}) contain additional details about those distributional properties to which the asymptotics of the LIL are uniform. In particular, \cref{corollary:uniform-lil} can be viewed as an extension of Chung's $L^1$ uniform SLLN discussed in \cref{corollary:chung-slln} to random variables in $L^2$.

\section{Applications to some Baum-Katz-type strong laws}\label{section:baum-katz}

Let us now observe how the concentration inequalities of the previous sections can be used to derive SLLNs and LILs in the spirit of \citet{baum1965convergence} (see also \citet{neri2025quantitative}) and in fact strengthen some of (the forward implications in) their results.
For a probability measure $\P \in \Pcalbar$ and any $q \in [1,2)$, \citet[Theorem~2]{baum1965convergence} show that 
\begin{equation}\label{eq:baum-katz}
  \EE_\P[|X_1|^q \log( |X_1| + 1 )] < \infty \quad \text{if and only if}\quad \forall \eps > 0,~~ \sum_{m=1}^\infty \frac{1}{m} \P \left [ \supkm  \frac{|S_k|}{k^{1/q}}  \geq \eps \right ] < \infty.
\end{equation}
The series being finite allows one to conclude that the probability in the summand vanishes at a sufficiently fast rate $r_m$ so that $r_m / m$ is summable, while the SLLNs of Kolmogorov and \citet{marcinkiewicz1937fonctions} provide no such rate of convergence. However, note that the Baum-Katz SLLN described above is neither stronger nor weaker than the SLLNs of Kolmogorov and \citet{marcinkiewicz1937fonctions} since the stronger conclusion above requires finiteness of a logarithmically higher moment. Furthermore, one cannot directly conclude the distribution-uniform SLLNs of \citet{chung_strong_1951} and \citet{waudby2024distribution} from the result of \citet{baum1965convergence} alone even when assuming a uniformly bounded higher moment. As we will see in the following proposition, the concentration inequality in \cref{theorem:lq-concentration} is sufficiently sharp to provide an explicit upper bound on the infinite series in \eqref{eq:baum-katz}, culminating in a strengthening and alternative proof of their result.
\begin{proposition}[A Baum-Katz-type strong law of large numbers]\label{corollary:baum-katz}
  Let $\P \in \Pcalbar$ and $q \in [1,2)$. For any $m \in \NN$ and $\eps > 0$, define $P_m^\brackeps := \P [ \supkm |k^{-{1/q}} S_k| \geq \eps ]$. Then,
  \begin{equation}\label{eq:baum-katz-type-upperbound}
    \sum_{m=1}^\infty \frac{P_m^\brackeps}{m} \leq 1 +  \frac{c_q}{e \log(2^{1/q-1/2})} + \frac{2603 \left ( \EE_\P[|X_1|^q \log (38|X_1|^q / \eps^q + 1)] \right ) }{(2-q) (\eps^2 \land 1)},
  \end{equation}
  where $c_q := 2/(2-q)$.
  In particular, if $\Pcal \subset \Pcalbar$ is a collection of probability measures for which
  \begin{equation}
    \sup_{\Pin}\EE_\P [|X_1|^q \log (|X_1|^q + 1)] < \infty, 
  \end{equation}
  then $\sup_{\Pin}\sum_{m=1}^\infty P_m^\brackeps / m < \infty$ for any $\eps > 0$. 
\end{proposition}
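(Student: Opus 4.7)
My plan is to bound $P_m^\brackeps$ termwise and then sum. At $m = 1$, I use only the trivial estimate $P_1^\brackeps / 1 \leq 1$, which accounts for the ``$1$'' on the right-hand side of \eqref{eq:baum-katz-type-upperbound}. For $m \geq 2$, applying \cref{theorem:lq-concentration} gives
\begin{equation}
\sum_{m=2}^\infty \frac{P_m^\brackeps}{m} \leq \frac{2}{2-q}\sum_{m=2}^\infty \frac{\exp(-m^{1/q-1/2})}{m} + \frac{451}{\eps^2 \land 1}\sum_{m=2}^\infty \frac{\U_\P^\brackq(\eps^q m^{(2-q)/4}/38)}{m},
\end{equation}
and I estimate the two sums separately, writing $\beta := 1/q - 1/2 > 0$ and $\alpha := (2-q)/4 \in (0, 1/4]$.

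For the exponential sum, the map $x \mapsto e^{-x^\beta}/x$ is strictly decreasing on $(0, \infty)$, so the integral test yields $\sum_{m \geq 2} e^{-m^\beta}/m \leq \int_1^\infty e^{-x^\beta}/x\, \dd x$. Substituting $u = x^\beta$ converts this integral to $\Eone(1)/\beta$. Applying the elementary inequality $e^{-u} \leq e^{-1}/u$ on $[1, \infty)$ (which follows from the fact that $u \mapsto u e^{1-u}$ is nonincreasing on $[1, \infty)$) inside $\Eone(1) = \int_1^\infty e^{-u}/u\, \dd u$ then bounds $\Eone(1)$ by $e^{-1}$. Hence this contribution is at most $2/((2-q)e\beta)$, and since $\log 2 \leq 1$ this is in turn bounded by $c_q/(e\log(2^{1/q - 1/2}))$, matching the middle term in \eqref{eq:baum-katz-type-upperbound}.

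For the moment sum, I extend the range of summation to $m \geq 1$ (only enlarging the upper bound) and apply Tonelli's theorem:
\begin{equation}
\sum_{m=1}^\infty \frac{\U_\P^\brackq(\eps^q m^\alpha/38)}{m} = \EE_\P\!\left[ |X_1|^q \sum_{m=1}^\infty \frac{1}{m}\,\1\{ m \leq Y^{1/\alpha} \} \right], \qquad Y := 38|X_1|^q/\eps^q.
\end{equation}
The inner sum vanishes on $\{Y < 1\}$ and equals $H_{\lfloor Y^{1/\alpha}\rfloor} \leq 1 + \alpha^{-1}\log Y$ on $\{Y \geq 1\}$, where $H_n$ is the $n$th harmonic number. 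The key calculus step is then
\begin{equation}\label{eq:plan-calc}
1 + \alpha^{-1}\log Y \leq \frac{\log(Y+1)}{\alpha \log 2}, \qquad Y \geq 1,\ \alpha \in (0, 1/4].
\end{equation}
I will verify \eqref{eq:plan-calc} by differentiating the gap in $Y$, identifying its unique minimizer over $[1, \infty)$ as $Y^\star := \log 2/(1-\log 2) \approx 2.26$, and computing that the gap at $Y^\star$ equals $(1/\alpha)\cdot 0.890\ldots - 1$, which is nonnegative for all $\alpha \leq 0.89$ and in particular for $\alpha \leq 1/4$. Substituting \eqref{eq:plan-calc} into the expectation, pulling out constants, and multiplying by $451/(\eps^2 \land 1)$ produces a bound of $(1804/\log 2)\EE_\P[|X_1|^q \log(38|X_1|^q/\eps^q + 1)]/[(2-q)(\eps^2 \land 1)]$, and a numerical check shows $1804/\log 2 < 2603$. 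Summing the three contributions gives \eqref{eq:baum-katz-type-upperbound}, and the uniform conclusion follows by taking $\sup_\Pin$ on both sides.

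The principal obstacle is establishing \eqref{eq:plan-calc} with the tight prefactor $1/(\alpha \log 2)$: the crude estimate $1 + \alpha^{-1}\log Y \leq \alpha^{-1}(1 + \log(Y+1))$ combined with $1 + \log(Y+1) \leq (1 + 1/\log 2)\log(Y+1)$ only yields a constant of order $(1+1/\log 2)/\alpha$, which is too large to recover the stated constant $2603$. Exploiting the slack from $\log 2 < 1$ properly requires the minimization argument indicated above.
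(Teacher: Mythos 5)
Your argument is correct and lands on exactly the same constants, but it organizes the summation differently from the paper. The paper first collapses the harmonic weights by dyadic blocking, $\sum_{m\geq 1} P_m^\brackeps/m \leq 1 + \sum_{j\geq 1} P_{2^j}^\brackeps$, and only then applies \cref{theorem:lq-concentration} along the subsequence $m=2^j$; the count of nonzero indicators in the resulting Tonelli step is then immediately $\lfloor \log_2(Y)/\alpha\rfloor \leq \log(Y+1)/(\alpha\log 2)$, with no calculus needed. You instead apply \cref{theorem:lq-concentration} at every $m$ and keep the weights $1/m$, which buys you a slightly sharper exponential term (your $2/((2-q)e\beta)$ versus the paper's $2/((2-q)e\beta\log 2)$, which you then relax to match) at the cost of having to dominate the harmonic number $H_{\lfloor Y^{1/\alpha}\rfloor}\leq 1+\alpha^{-1}\log Y$ by $\log(Y+1)/(\alpha\log 2)$; your minimization at $Y^\star=\log 2/(1-\log 2)$ is the right way to do this, and the resulting prefactor $1804/\log 2<2603$ coincides with the paper's. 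The only thing you gloss over is the final uniform claim: the right-hand side of \eqref{eq:baum-katz-type-upperbound} involves $\EE_\P[|X_1|^q\log(38|X_1|^q/\eps^q+1)]$, not $\EE_\P[|X_1|^q\log(|X_1|^q+1)]$, so before taking $\sup_\Pin$ you still need the one-line expansion $\log(38|X_1|^q/\eps^q+1)\leq \log(38/(\eps^q\wedge 1))+\log(|X_1|^q+1)$ together with the observation that $\sup_\Pin\EE_\P|X_1|^q$ is finite under the stated hypothesis; the paper spells this out.
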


Note that \cref{corollary:baum-katz} can be viewed as a strengthening of the forward implication in \eqref{eq:baum-katz} since an explicit upper bound on the series $\sum_{m=1}^\infty P_m^\brackeps / m$ is provided. The proof is short so we provide it here. 
\begin{proof}[Proof of \cref{corollary:baum-katz}]
  First, note that
  \begin{align}
    \sum_{m=1}^\infty \frac{P_m^\brackeps}{m} &= \sum_{j=1}^\infty \sum_{m=2^{j-1}}^{2^j-1} \frac{P_{m}^\brackeps}{m} \leq \sum_{j=1}^\infty \frac{\cancel{2^{j-1}}}{\cancel{2^{j-1}}} \P \left [ \sup_{k \geq 2^{j-1}}  \frac{ |S_k|}{k}  \geq \eps \right ] \leq 1 + \sum_{j=1}^\infty P_{2^j}^\brackeps.
  \end{align}
  Applying \cref{theorem:lq-concentration} yields
  \begin{align}
    \sum_{j=1}^\infty P_{2^j}^\brackeps \leq c_{q} \sum_{j=1}^\infty \exp \left ( -2^{j(1/q - 1/2)} \right ) + \frac{451}{\eps^2 \land 1} \sum_{j=1}^\infty  \EE_\P \left [ \frac{|X_1|^q}{38} \1 \{ |X_1|^q \geq \eps^q 2^{j(1/2 - q/4)} \} \right ].
  \end{align}
  Analyzing the series in the second term of the right-hand side, we have 
  \begin{align}
    \sum_{j=1}^\infty  \EE_\P \left [\frac{ |X_1|^q}{38} \1 \{ |X_1|^q \geq \eps^q 2^{j(1/2 - q/4)} \} \right ] &= \EE_\P \left [|X_1|^q \sum_{j=1}^\infty \1 \{ 38|X_1|^q \eps^{-q} \geq 2^{j(1/2 - q/4)} \} \right ] \\
    &\leq \frac{1}{\log(2)} \EE_\P \left [ |X_1|^q \frac{
    \log(38 |X_1|^q \eps^{-q} + 1)}{1/2 - q/4} \right ],
  \end{align}
  where the inequality upper bounds the number of non-zero indicators in the series.
  Since $\log (2) \geq 0.69314$, we have 
  \begin{equation}
     \sum_{m=1}^\infty \frac{P_m^\brackeps}{m} \leq 1 + c_q \sum_{j=1}^\infty \exp \left ( -2^{j(1/q - 1/2)} \right ) + \frac{2603 \left ( \EE_\P[|X_1|^q \log (38|X_1|^q \eps^{-q} + 1)] \right ) }{(2-q) (\eps^2 \land 1)}.
  \end{equation}
    Now, observe that
    \begin{align}
      \sum_{j=1}^\infty \exp \left ( -2^{j(1/q-1/2)} \right ) &\leq \int_{0}^\infty \exp \left ( -2^{y(1/q-1/2)} \right )\dd y = \frac{1}{\log(2^{1/q - 1/2})}\int_1^\infty x^{-1} e^{-x} \dd x \leq \frac{1}{e \log(2^{1/q - 1/2})},        \end{align}
    where the equality uses the change of variables $x = 2^{y(1/q - 1/2)}$ and the last inequality bounds $x^{-1}$ by $1$ inside the integral. This completes the proof of the upper bound in \eqref{eq:baum-katz-type-upperbound}.

    Next, to prove that $\sup_\Pin \EE_\P[|X_1|^q \log(|X_1|^q + 1)] < \infty$ implies $\sup_\Pin \sum_{m=1}^\infty P_m^\brackeps / m < \infty$, notice that
    \begin{align}
      \EE_\P\left[|X_1|^q \log \left(\frac{38  |X_1|^q}{\eps^q} + 1\right)\right] &\leq \EE_\P\left [|X_1|^q \log \left ( \frac{38}{\eps^q \land 1} (|X_1|^q + 1) \right ) \right ] \\
                                                     &= \log\left(\frac{38}{\eps^q \land 1}\right) \EE_\P \left [ |X_1|^q  \right ] + \EE_\P \left [|X_1|^q \log (|X_1|^q + 1) \right ];
    \end{align}
hence the statement follows.
\end{proof}

Let us now consider an analogous setup to \cref{corollary:baum-katz} but for the LIL. Recall that in \citet[Theorem~6]{baum1965convergence}, the authors show that for a random variable $X_1$ with unit variance, if $\EE_\P[|X_1|^2 \log^{1+\delta} (|X_1| + 1)] < \infty$ for some $\delta > 0$, then
\begin{equation}\label{eq:baum-katz-lil}
 \text{for any } \gamma > 0,\quad \sum_{m=3}^\infty \frac{1}{m \log(m)} \P \left [ \supkm \frac{|S_k|}{\sqrt{2 k \log \log k}}  \geq 1 + \gamma \right ] < \infty.
\end{equation}
The following result employs \cref{theorem:lil} to obtain a distribution-uniform Baum-Katz-type LIL under weaker moment conditions than those listed above.
\begin{proposition}[A Baum-Katz-type law of the iterated logarithm]\label{proposition:baum-katz-lil}
  Fix $\P \in \Pcalbar$. For $\eps > 0$, denote
  \begin{equation}
    P_m^\brackeps := \P \left [ \supkm  \frac{|S_k|}{c_\eps \sqrt{k [\log \log ((1 + \eps)^2 k ) + \ell_\eps]}}  \geq 1 \right ],
  \end{equation}
  where the constants $c_\eps$ and $\ell_\eps$ are given as in \cref{theorem:lil}.
  Then for any $m \in \NN \setminus \{1\}$, any $\eps  > 0$, and any $\delta > 0$,
  \begin{align} \label{eq:251009}
    \sum_{m=2}^\infty \frac{P_m^\brackeps}{m \log (m)} \leq \sum_{m=2}^\infty \frac{\log^\eps(1 + \eps) \eps^{-1} \zeta(1 + \eps)}{m \log^{1 + \eps}(2m/3)} + \frac{262}{\eps^2} \left ( \sum_{m=2}^\infty \frac{m^{-4/3}}{ \log (m)} + \sum_{m=2}^\infty \frac{ 1 + \EE_\P[X_1^2 \log^\delta (X_1^2 + 1)]}{(1/3)^{\delta}m \log^{1+\delta}(m)}  \right ).
  \end{align}
  In particular, if $\Pcal \subset \Pcalbar$ is a collection of probability measures for which $X_1$ has unit variance and $\sup_\Pin \EE_\P[X_1^2 \log^\delta (X_1^2 + 1)] < \infty$ for some $\delta > 0$, then
  \begin{equation}
    \sup_\Pin \sum_{m=2}^\infty \frac{P_m^\brackeps}{m \log (m)} < \infty.
  \end{equation}
\end{proposition}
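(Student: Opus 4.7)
The plan is to mimic the proof of \cref{corollary:baum-katz}, substituting the iterated logarithm inequality of \cref{theorem:lil} for \cref{theorem:lq-concentration}. Fix $\eps, \delta > 0$, take $\sigmaub := 1$ (matching the unit-variance normalization implicit in the definition of $P_m^\brackeps$), and apply \cref{theorem:lil} with $\lambda := 1/3$. This gives, for each $m \geq 2$,
\begin{equation*}
  P_m^\brackeps \leq \frac{\log_{1+\eps}^{-\eps}(m/3)}{\eps\, \zeta(1+\eps)} + \frac{262}{\eps^2}\bigl(m^{-1/3} + \widebar \U_\P^\bracktwo(m^{1/3})\bigr).
\end{equation*}
Dividing by $m \log m$ and summing over $m \geq 2$ reduces the task to bounding three subseries, each matching one of the three groups on the right-hand side of \eqref{eq:251009}.

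For the tail subseries I would expand $\log_{1+\eps}^{-\eps}(m/3) = \log^\eps(1+\eps)\cdot \log^{-\eps}(m/3)$, use $1/\zeta(1+\eps) \leq \zeta(1+\eps)$ to move the $\zeta$-factor into the numerator (matching the form in \eqref{eq:251009}), and invoke elementary monotone comparisons between $\log(m)$, $\log(m/3)$, and $\log(2m/3)$ together with the integral test applied to $\int dx/(x \log^{1+\eps} x)$ to arrive at the first sum in \eqref{eq:251009}. The $m^{-1/3}$ contribution trivially becomes $\sum_m m^{-4/3}/\log m$, which converges and recovers the middle sum in \eqref{eq:251009}.

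The substantive step is bounding $\sum_m \widebar \U_\P^\bracktwo(m^{1/3})/(m \log m)$, for which I would use a ``trade'' inequality analogous to the indicator-to-log bound in the proof of \cref{corollary:baum-katz}. Since $|X_1^2-\sigma_\P^2| \leq X_1^2 + 1$ under the unit-variance normalization, and since on the event $\{|X_1^2 - \sigma_\P^2| \geq m^{1/3}\}$ one has $\log^\delta(X_1^2+1) \geq ((1/3)\log m)^\delta (1 - o(1))$, inserting the ratio $\log^\delta(X_1^2+1)/((1/3)\log m)^\delta \geq 1$ inside the expectation yields
\begin{equation*}
\widebar \U_\P^\bracktwo(m^{1/3}) \leq \frac{C_\delta\bigl(1 + \EE_\P[X_1^2 \log^\delta(X_1^2 + 1)]\bigr)}{(1/3)^\delta \log^\delta m}
\end{equation*}
for a constant $C_\delta$ depending only on $\delta$. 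Dividing by $m \log m$ produces a summand proportional to $1/(m \log^{1+\delta} m)$, summable by the integral test and matching the third sum in \eqref{eq:251009}. The uniform conclusion over $\Pcal$ is then immediate since the full bound depends on $\P$ only through $\EE_\P[X_1^2 \log^\delta(X_1^2 + 1)]$.

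The main obstacle I expect is the trade inequality above: extracting an extra $\log^\delta m$ factor from the tail event while absorbing the shift from $|X_1^2 - \sigma_\P^2|$ to $X_1^2 + 1$ without losing a logarithmic factor requires careful handling of both the small-$|X_1|$ contribution (where $\log^\delta(X_1^2+1)$ is $O(1)$ and must be absorbed into the additive $1$ in the numerator) and the constant additive shifts in comparing $\log(m^{1/3})$ to $(1/3)\log m$.
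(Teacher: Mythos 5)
Your proposal follows essentially the same route as the paper: apply \cref{theorem:lil} with $\lambda = 1/3$ and $\sigmaub = 1$, split the resulting bound into the three series, and control the truncated-second-moment series via $|X_1^2-1| \le X_1^2+1$ together with your ``trade'' inequality, which in the paper is just the exact observation that $\1\{a > b\} \le \log^\delta(a)/\log^\delta(b)$ for $a,b \ge 1$ --- so no $(1-o(1))$ factor or extra constant $C_\delta$ is actually needed. The one caveat is that you quote \cref{theorem:lil} with $\log_{1+\eps}^{-\eps}(m/3)$ and then try to compare logarithms upward, which goes the wrong way; the paper's proof instead uses $\log_{1+\eps}^{-\eps}(2m/3)$ (consistent with the proof, rather than the statement, of \cref{theorem:lil}), which is what matches the $\log^{1+\eps}(2m/3)$ appearing in \eqref{eq:251009}.
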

\begin{proof}
  Applying \cref{theorem:lil} with $\lambda = 1/3$, we have 
  \begin{align}
    \sum_{m=2}^\infty \frac{1}{m\log m}P_m^\brackeps &\leq \sum_{m=2}^\infty \frac{1}{m\log m} \left ( \frac{1}{\eps \zeta(1+\eps) \log^{\eps}_{1+\eps}(2m/3) } + \frac{262}{\eps^2 } \left ( m^{-1/3} + \EE_\P[|X_1^2 - 1| \1 \{ |X_1^2-1| > m^{1/3} \}] \right )\right )\\
    &\leq \sum_{m=2}^\infty \frac{\log_{1+\eps}^{-\eps}(2m/3)}{m \log(m)  \eps \zeta(1+\eps)} + \frac{262}{\eps^2}  \sum_{m=2}^\infty  \left(\frac{m^{-4/3}}{ \log(m)} + \frac{\EE_\P \left [ (X_1^2 + 1) \1 \{ X_1^2 + 1 > m^{1/3} \} \right ]}{m \log m} \right ).
  \end{align}
  Now, \eqref{eq:251009} follows by observing that for any $a,b \geq 1$, we have $\1 \{ a > b \} \leq \log^\delta(a) / \log^\delta(b)$.
\end{proof}

Even in the case where $\Pcal = \{ \P \}$ is taken to be a singleton, \cref{proposition:baum-katz-lil} improves on \citep[Theorem~6]{baum1965convergence} by only requiring that $\EE_\P[X_1^2 \log^\delta(X_1^2 + 1)] < \infty$ for some $\delta > 0$ rather than for some $\delta > 1$.
Furthermore, note that similar to the relationship between \cref{corollary:baum-katz} and the SLLNs of Kolmogorov, Marcinkiewicz, and Zygmund, \citep[Theorem~6]{baum1965convergence} requires a stronger moment assumption than Kolmogorov's LIL. Nevertheless, the inequality in \cref{theorem:lil} is sharp enough to deduce both.

\section{Pathwise strong laws and laws of the iterated logarithm}\label{section:game-theoretic}
While SLLNs and LILs are typically written in terms of probability-one events such as in \eqref{eq:intro-kolmogorov-slln} and \eqref{eq:kolmogorov-lil}, there has been renewed interest in \emph{pathwise} (or \emph{game-theoretic}) presentations and proofs of almost sure limit theorems that rely on the explicit construction of so-called \emph{$e$-processes}, the definition of which we review now.

\begin{definition}[$e$-process]
  \label{definition:e-process}
Fix $\P \in \Pcalbar$ and let $(\Fcal_n)_{n \in \NN_0}$ be a filtration. A nonnegative $(\Fcal_n)_{n \in \NN_0}$-adapted stochastic process $\infseqnz{E_n}$ is said to be a \emph{$\P$-$e$-process} if
\begin{equation}\label{eq:e-proc}
  \EE_\P[E_\tau] \leq 1
\end{equation}
for an arbitrary $(\Fcal_n)_{n \in \NN_0}$-stopping time $\tau$. The above process is said to be a $\Pcal$-$e$-process for an arbitrary family of probability measures $\Pcal$   if \eqref{eq:e-proc} holds for all $\Pin$.
\end{definition}
Broadly speaking, given an event $A \in \Fcal$, a proof of the claim ``$\P[A] = 0$'' is often given the description of \emph{pathwise} or \emph{game-theoretic} if one constructs an explicit $\P$-$e$-process $\infseqnz{E_n}$ with the property that this process diverges pathwise on $A$, meaning that $E_n(\omega) \toinfty$ for every $\omega \in A$; see e.g.~\citet{sasai2019erdos} and \citet{ruf2022composite}.
Such a construction is directly connected to the notion of $A$ having probability zero as illustrated by Ville's theorem \citep{ville1939etude} which states that $\P[A] = 0$ if and only if
there exists a $\P$-$e$-process that diverges pathwise on $A$. In Ville's writing of his theorem, the $e$-process was to be interpreted as the accumulated wealth of a hypothetical gambler playing a ``fair'' sequential game. Intuitively, a gambler playing such a game over time can never become infinitely rich except with zero probability; formally, $\P[\sup_{n \in \NN} E_n < \infty] = 1$, a consequence of Ville's inequality for nonnegative supermartingales \citet{ville1939etude} (see also \citet[\S 6.1]{howard_exponential_2018} for an elementary proof) applied to the Snell envelope of $(E_n)_{n \in \NN}$ under $\P$.
It is because of this hypothetical gambler and the game they are playing that such proofs and constructions are often described as ``game-theoretic''.
However, the same phrase is also used to describe theorems and proofs in the so-called game-theoretic formalism of probability as set out by \citet{shafer2005probability,shafer2019game}, where Kolmogorov's axioms of measure-theoretic probability are eschewed. To emphasize that we are operating in a purely measure-theoretic setting, we drop the term ``game-theoretic'' altogether going forward and use the term ``pathwise'' instead. 
\begin{remark}\label{remark:ville-theorem-eprocess}
  One will typically find Ville's theorem stated in terms of a nonnegative $\P$-\emph{martingale} diverging to $\infty$ pathwise on an event $A$ rather than a $\P$-$e$-process doing so, but the former can be replaced by the latter without loss of generality; see \citet[Remark 3.2]{ruf2022composite}. Note that all nonnegative $\P$-martingales started at one are $\P$-$e$-processes --- a consequence of Doob's optional stopping theorem --- but there exist $e$-processes that are neither supermartingales nor martingales.
\end{remark}

\subsection{Deriving pathwise SLLNs and LILs from concentration inequalities} \label{SS:5.1}

Let us now illustrate how pathwise proofs of SLLNs and LILs can be directly derived once provided access to the concentration inequalities of \cref{section:slln,section:lil}. First, fix $q \in [1,2)$ and consider a probability measure $\P \in \Pcalbar$ so that $\EE_\P|X_1|^q < \infty$. Consider also the event $A_\qdiv$ which states that the SLLN does not hold at the Marcinkiewicz-Zygmund rate of $o(n^{1/q-1})$:
\begin{equation}
  A_\qdiv := \left \{ \frac{S_n}{n^{1/q}}  \text{ does not converge to } 0 \right \}.
\end{equation}
Construct the process $\infseqnz{E_n^\brackq}$  by
\begin{equation}\label{eq:e-process-qdiv}
  E_n^\brackq := \sum_{j \in \NN} \1 \left \{ \max_{m_j \leq k \leq n} \frac{|S_k|}{k^{1/q}} \geq \frac{1}{j} \quad\text{and}\quad n \geq m_j  \right \}.
\end{equation}
Here, we use the notation
\[
    m_j := \min \left \{ m \in \NN :  \frac{2\exp (-m^{1/q - 1/2})}{2-q} + 451 j^2  \U_\P^\brackq\left(\frac{m^{1/2 - q/4}}{38 j^q}\right) \leq 2^{-j} \right \}; \qquad j \in \NN,
\]
where $\U_\P^\brackq$ is as in \cref{theorem:lq-concentration}.
To see why $E_n^\brackq$ forms a $\P$-$e$-process that diverges pathwise on $A_\qdiv$, first note that for any stopping time $\tau$, we have 
\begin{equation}
  \EE_\P \left [ E_\tau^\brackq \right ] \leq \sum_{j \in \NN} \P \left [ \bigcup_{k \geq m_j} \left\{ \frac{|S_k|}{k^{1/q}} \geq \frac{1}{j} \right\} \right ] \leq \sum_{j\in \NN} 2^{-j}, 
\end{equation}
where the second inequality follows from \cref{theorem:lq-concentration} instantiated with $\eps = 1/j$. It follows that $E_n^\brackq$ forms a $\P$-$e$-process. Let us now see why $E_n(\omega) \nearrow \infty$ as $\ntoinfty$ for every $\omega \in A_\qdiv$. Notice that by definition of $A_\qdiv$, for every $\omega \in A_\qdiv$ there exists some $T(\omega) \in \NN$ so that for every $j \geq T(\omega)$, we have $|S_k(\omega)|/k^{1/q} \geq 1/j$ for infinitely many $k \in \NN$. Therefore,  the $j^\tth$ summand in \eqref{eq:e-process-qdiv} is equal to one eventually, and therefore $E_n^\brackq(\omega)$ diverges as $\ntoinfty$.

A similar story can be told for the LIL. Consider some $\P \in \Pcalbar$ for which $\sigma_\P^2 := \Var_\P[X_1] < \infty$ and define the event $A_\fluc$ for which the LIL does not hold by
\begin{equation}
  A_\fluc := \left \{ \limsup_\ntoinfty  \frac{|S_n/ \sigma_\P|}{\sqrt{2 n \log \log n}}  > 1 \right \},
\end{equation}
and the process $\infseqnz{E_n^\bracktwo}$ by
\begin{equation}
  E_n^\bracktwo := \sum_{j\in\NN} \1 \left \{ \max_{m_j \leq k \leq n} \frac{|S_k / \sigma_\P|}{c_{1/j} \sqrt{k [ \log \log ((1 + 1/j)^2 k) + \ell_{1/j}]}}  \geq 1 \quad\text{and}\quad n \geq m_j \right \},
\end{equation}
and $E_0^\bracktwo := 0$
where $m_j$ is the smallest integer for which the right-hand side of the inequality in \cref{theorem:lil} instantiated with $(\lambda, \eps, \widebar \sigma^2) = (1/3, 1/j, \sigma_\P^2)$ is at most $2^{-j}$, i.e.
\begin{equation}
  m_j := \min \left \{ m \in \NN \setminus \{1\} : \frac{j\log^{-1/j}_{1+1/j}(2m/3)}{\zeta(1 + 1/j)} + \frac{262}{(\sigma_\P^4 / j^2)\land 1} \left ( m^{-1/3} + \widebar \U_\P^\bracktwo (m^{1/3}) \right ) \leq 2^{-j} \right \},
\end{equation}
where $c_{1/j}$, $\ell_{1/j}$, $\zeta$, and $\widebar \U_\P$ are as in \cref{theorem:lil}.
The justification for why $\infseqnz{E_n^\bracktwo}$ is both a $\P$-$e$-process and diverges pathwise on $A_\fluc$ is essentially the same as above but with \cref{theorem:lil} invoked  instead of \cref{theorem:lq-concentration}. 

As far as we know, $\infseqnz{E_n^\brackq}$ and $\infseqnz{E_n^\bracktwo}$ are the first $e$-processes to be derived for the SLLN with Marcinkiewicz-Zygmund rates and for the LIL only under finite $q^\tth$ moment assumptions when $q \in (1,2]$. The case of $q=1$ under a finite first moment assumption was completed in \citet[Theorem~4.3]{ruf2022composite} and the case of the LIL was studied in \citet{sasai2019erdos} for self-normalized martingales.

\subsection{A distribution-uniform analogue of Ville's theorem for event lattices}

The discussion thus far has focused on $\P$-$e$-processes for a single probability measure $\P$. We will provide an analogue of Ville's theorem for a family of probability measures $\Pcal$ when applied to events that can be represented as union-intersections of certain lattices of events. Concretely, we will consider events $A$ that can be written as
\begin{equation}
  A = \bigcup_{\eps > 0} \bigcap_{m=1}^\infty A^\brackmeps
\end{equation}
for some collection $(A^\brackmeps)_{(m,\eps) \in \NN \times \RR^+}$ with the property that $A^{(m_1, \eps_1)} \supseteq A^{(m_2, \eps_2)}$ for every $(m_1, \eps_1), (m_2, \eps_2) \in \NN\times \RR^+$ whenever $m_1 \leq m_2$ and $\eps_1 \leq \eps_2$. The lattice structure is induced from the aforementioned set inclusion. For brevity, we will refer to such collections as ``event lattices''. 
The reason for considering events of this type is because distribution-uniform SLLNs and LILs are implicitly statements about the lattices that represent events rather than the events \emph{per se}. For example, recall that Chung's SLLN (\cref{corollary:chung-slln}) states that if the first moment is $\Pcal$-uniformly integrable, then
\begin{equation}\label{eq:chung-again}
  \forall \eps > 0,\quad \lim_\mto \sup_\Pin \P \left [ \supkm  \frac{|S_k|}{k}  \geq \eps \right ] = 0.
\end{equation}
Note that $A_\onediv$ can be written as a union-intersection of the events in the above probabilities, i.e.
\begin{equation}
  A_\onediv = \left \{ \frac{S_n}{n} \text{ does not converge to 0} \right \} = \bigcup_{\eps > 0} \bigcap_{m=1} \left \{ \supkm \frac{|S_k|}{k}  \geq \eps \right \}.
\end{equation}
Similarly, the distribution-uniform SLLN of \citet{waudby2024distribution} (\cref{corollary:wlr-slln}) is a statement about the lattice given by $A^\brackmeps := \{\supkm |S_k| / k^{1/q}   \geq \eps\}$, and the distribution-uniform LIL upper bound in \cref{corollary:uniform-lil} is one about the lattice given by $A^\brackmeps := \{ \supkm|S_k/\sigma| / \sqrt{2 \widebar  k \log \log k} \geq 1 + \eps \}$ where $\widebar \sigma$ is defined in the statement of the corollary.
Relevant to the present section, we will soon see that the event lattices used to represent ``distribution-uniform convergence'' in the sense of \citet{chung_strong_1951} and as seen in \eqref{eq:chung-again} are crucial to a uniform generalization of $e$-processes diverging to $\infty$, and that these two notions of uniformity are equivalent in a certain sense. Before making this connection explicit, we need the following definition.

\begin{definition}[Pathwise tail-uniformity]
  \label{definition:tail event-uniformity}
   Let $\infseqnz{E_n}$ be a process. We say that $\infseqnz{E_n}$ diverges pathwise to $\infty$ \emph{tail-uniformly} on an event lattice $(A^\brackmeps)_{(m,\eps)\in \NN \times \RR^+}$ if for all $\eps > 0$,
  \begin{equation}
   \lim_\mto \inf_{\omega \in A^{(m, \eps)}} \sup_{n \in \NN} E_n(\omega) = \infty.%
 \end{equation}
\end{definition}
\cref{definition:tail event-uniformity} rules out those processes that may diverge pathwise to $\infty$ for all $\omega \in A^\brackmeps$ for every $m \in \NN$ and $\eps > 0$ but may not do so uniformly in this sequence of tail events as $\mto$. Concretely, if a process does not diverge pathwise tail-uniformly, then there could exist some constant $U$ so that no matter what values $\eps>0$ and $m \in \NN$ are taken to be, there may exist some $\omega_{m,U} \in A^\brackmeps$ depending on $m$ and $U$ for which $\sup_{n \in \NN} E_n(\omega_{m,U}) \leq U$. 
In \cref{section:implications-composite}, we give an example of an $e$-process that diverges pathwise on the event that the SLLN fails to hold, but not tail-uniformly on a natural lattice that approximates that event.

As alluded to previously, the reason for introducing \cref{definition:tail event-uniformity} is due to its role in equivalent characterizations of strong asymptotic events having distribution-uniform probability zero in the sense of \citet{chung_strong_1951}. This role is made precise in the following result.
\begin{proposition}[A distribution-uniform analogue of Ville's theorem for event lattices]\label{P:251021}\label{proposition:distribution-uniform-ville}
  Fix a family of probability measures $\Pcal$, a filtration $(\Fcal_n)_{n \in \NN_0}$, and the event lattice consisting of unions of events: 
  \[\Acal := \left ( \bigcup_{k\geq m} A_k^\brackeps \right )_{(m , \eps) \in \NN \times \RR^+},\]
  where $(A_k^\brackeps)_{(k,\eps) \in \NN \times \RR^+}$ is a collection of sets satisfying $A_k^\brackeps \in \Fcal_k$ for all $(k,\eps) \in \NN \times \RR^+$.
  Then
  \begin{equation}\label{eq:uniform-convergence-abstract}
    \forall \eps > 0,\quad \lim_\mto \supP \P \left [ \bigcup_{k \geq m} A_k^\brackeps \right ] = 0
  \end{equation}
  if and only if there exists an $(\Fcal_n)_{n \in \NN_0}$-adapted $\Pcal$-$e$-process $\infseqnz{E_n}$ that diverges pathwise to $\infty$ tail-uniformly on $\Acal$.
\end{proposition}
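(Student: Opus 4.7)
The plan is to prove the two implications separately, recycling ideas from \cref{SS:5.1}. For the $(\Leftarrow)$ direction, suppose a $\Pcal$-$e$-process $\infseqnz{E_n}$ diverges pathwise tail-uniformly on $\Acal$. The key ingredient is Ville's inequality in $e$-process form, $\supP \P[\sup_n E_n \geq C] \leq 1/C$ for every $C > 0$, which I would derive by applying the $e$-process defining inequality to the bounded stopping time $\tau_{C, N} := \inf\{n \leq N : E_n \geq C\} \wedge N$, using Markov's inequality on $\{\tau_{C,N} < N\}$, and sending $N \nearrow \infty$. Tail-uniform divergence then supplies, for each fixed $\eps > 0$ and every $C > 0$, some $M$ such that $\bigcup_{k \geq m} A_k^\brackeps \subseteq \{\sup_n E_n \geq C\}$ whenever $m \geq M$; chaining the two bounds yields $\supP \P[\bigcup_{k \geq m} A_k^\brackeps] \leq 1/C$, and sending $C \nearrow \infty$ produces \eqref{eq:uniform-convergence-abstract}.

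For the $(\Rightarrow)$ direction, I would adapt the construction in \eqref{eq:e-process-qdiv}. Under \eqref{eq:uniform-convergence-abstract}, for each $j \in \NN$ let $m_j$ be the smallest positive integer with $\supP \P[\bigcup_{k \geq m_j} A_k^{(1/j)}] \leq 2^{-j}$, which exists by hypothesis. Define
\begin{equation*}
  E_n := \sum_{j=1}^\infty \1\left\{ n \geq m_j \text{ and } \bigcup_{k = m_j}^n A_k^{(1/j)} \text{ occurs} \right\}, \qquad n \in \NN_0.
\end{equation*}
Adaptedness of $\infseqnz{E_n}$ to $(\Fcal_n)_{n \in \NN_0}$ is immediate since $A_k^{(1/j)} \in \Fcal_k \subseteq \Fcal_n$ whenever $k \leq n$. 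For an arbitrary stopping time $\tau$, the $j$-th indicator in $E_\tau$ is deterministically bounded by $\1\{\bigcup_{k \geq m_j} A_k^{(1/j)}\}$; exchanging expectation and the countable nonnegative sum by monotone convergence and using $\sum_{j=1}^\infty 2^{-j} = 1$ yields $\EE_\P[E_\tau] \leq 1$ uniformly over $\Pin$, so $\infseqnz{E_n}$ is a $\Pcal$-$e$-process (and in particular almost surely finite at each $n$).

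The bulk of the work is verifying tail-uniform divergence on $\Acal$. Fix $\eps > 0$ and any $j \in \NN$ with $1/j \leq \eps$. By the event-lattice monotonicity in $\eps$, $\bigcup_{k \geq m} A_k^\brackeps \subseteq \bigcup_{k \geq m} A_k^{(1/j)}$, and provided $m \geq m_j$ this is in turn contained in $\bigcup_{k \geq m_j} A_k^{(1/j)}$. Consequently, for every $\omega$ in the tail event $\bigcup_{k \geq m} A_k^\brackeps$ the $j$-th summand of $E_n(\omega)$ is eventually equal to $1$ as $n \nearrow \infty$, so that
\begin{equation*}
  \inf_{\omega \in \bigcup_{k \geq m} A_k^\brackeps} \sup_{n} E_n(\omega) \geq \left| \left\{ j \in \NN :  1/j \leq \eps \text{ and } m_j \leq m \right\} \right|,
\end{equation*}
and the right-hand side tends to $\infty$ as $m \nearrow \infty$ because each $m_j$ is finite and there are infinitely many $j$ with $1/j \leq \eps$.

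I expect the main subtlety to lie not in any individual computation but in recognizing that \emph{tail-uniform} divergence (rather than mere pathwise divergence on the limit event) is the correct notion, and that it hinges precisely on event-lattice monotonicity in the accuracy parameter $\eps$: this is what lets one nest the $\eps$-tail events inside the $(1/j)$-tail events for all $j \geq 1/\eps$ simultaneously, activating more and more summands of $E_n$ as $m$ grows and thereby closing the equivalence.
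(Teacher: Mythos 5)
Your proof is correct and follows essentially the same route as the paper's: the identical construction of $m_j$ and the indicator-sum $e$-process for the forward direction, and a Ville-type stopping-time argument combined with tail-uniformity for the converse. Your only deviations are minor refinements — truncating the stopping time at $N$ before passing to the limit, and counting $\{j : 1/j \leq \eps,\ m_j \leq m\}$ directly rather than fixing a window $\{j(\eps),\dots,j(\eps)+U\}$ — which if anything handle the measure-theoretic details slightly more carefully than the paper does.
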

Note that \cref{proposition:distribution-uniform-ville} can be stated irrespective of any set $A$ that uses $\Acal$ as a representing event lattice.
The proof of \cref{proposition:distribution-uniform-ville} is short and constructive so we present it here.
\begin{proof}[Proof of \cref{P:251021}]
  Suppose that \eqref{eq:uniform-convergence-abstract} holds.
  For any $j \in \NN$, define
  \begin{equation}
    m_j := \min \left \{ m \in \NN : \supP \P \left [ \bigcup_{k \geq m} A_k^{(1/j)} \right ] \leq 2^{-j} \right \},
  \end{equation}
  and for any $\omega \in \Omega$ and $n \in \NN_0$, define
  \begin{equation}
    E_n(\omega) := \sum_{j \in \NN} \1 \left \{ \omega \in \bigcup_{k = m_j}^n A_k^{(1/j)} \right \}.
  \end{equation}
  To show that $\infseqnz{E_n}$ is a $\Pcal$-$e$-process, let $\tau$ be any $(\Fcal_n)_{n \in \NN_0}$-stopping time and observe that
  \begin{align}
    \EE_\P \left [ E_\tau \right ] &\leq \sum_{j \in \NN} \P \left [ A_\tau^{(1/j)} \1 \{ \tau \geq m_j \} \right ] \leq \sum_{j \in \NN} \P \left [ \bigcup_{k \geq m_j} A_k^{(1/j)} \right ] \leq 1
  \end{align}
  by construction of $m_j$.
  We next argue that $(E_n)_{n \in \NN_0}$ diverges pathwise to $\infty$ tail-uniformly on $\Acal$. To this end, let $\eps > 0$ and $U > 0$ be arbitrary. Define $j(\eps) := \left \lceil 1/\eps \right \rceil$ and consider an arbitrary $m \geq m_{j(\eps) + U}$. Then for any $\omega \in A^{(\eps, m)} =  \bigcup_{k\geq m} A_k$, we have 
  \begin{align}
    \sup_{n \in \NN} E_n(\omega) &\geq \sum_{j=j(\eps)}^{j(\eps) + U} \1 \left \{ \omega \in  \bigcup_{k = m_j}^\infty A_k^{(1/j)} \right \} \geq \sum_{j=j(\eps)}^{j(\eps) + U} \1 \left \{ \omega \in  \bigcup_{k = m_j}^\infty A_k^{(\eps)} \right \} = U + 1,
  \end{align}
  where the second inequality follows from monotonicity of $A_k^\brackeps$ in $\eps > 0$ and the final equality follows from the fact that $\bigcup_{k \geq m} A_k^\brackeps \subseteq \bigcup_{k \geq m_j} A_k^\brackeps$ for every $j \leq j(\eps) + U$. Since $U > 0$ was arbitrary, we have 
  \begin{equation}
    \lim_\mto \inf_{\omega \in A^{(\eps, m)}} \sup_{n \in \NN} E_n(\omega) = \infty.
  \end{equation}
  
We now prove the converse. Suppose that $\infseqnz{E_n}$ diverges pathwise to $\infty$  tail-uniformly on $\Acal$. By \cref{definition:tail event-uniformity}, we have that for every $\eps> 0$ and $U > 0$, there exists some $m_U$ for which $\sup_{n \in \NN}E_n(\omega) \geq U$ for all $\omega \in A^{(\eps, m_U)}$. %
Considering the stopping time $\tau := \min \{n \in \NN_0: E_n \geq U\}$ we get
\begin{align}
  \sup_\Pin \P \left [ \bigcup_{k = m_U}^\infty A_k^\brackeps \right ] &\leq \sup_\Pin \P \left [ \sup_{n \in \NN} E_n \geq U \right ] = \supP \P \left [ E_\tau \geq U \right ] \leq \frac{1}{U} \sup_\Pin \EE_\P \left [ E_\tau \right ] \leq \frac{1}{U}.
\end{align}
Since $U > 0$ was arbitrary, this completes the proof.
\end{proof}
Now that we have \cref{proposition:distribution-uniform-ville} in place, we are ready to use it to provide necessary and sufficient conditions for a uniform pathwise SLLN to hold. Fix $q \in [1, 2)$ and let $A_\qdiv$ be the event that the SLLN does not hold at the Marcinkiewicz-Zygmund rate of $o(n^{1/q - 1})$. Notice that this event has an event lattice presentation as follows: 
\begin{equation}
  A_\qdiv = \left \{ \lim_\ntoinfty \frac{S_n}{n^{1/q}}  \neq 0 \right \} \equiv \bigcup_{\eps > 0} \bigcap_{m=1}^\infty \bigcup_{k=m}^\infty \left \{  \frac{|S_k|}{k^{1/q}}   \geq \eps \right \}.\label{eq:Aqdiv as a union of nested limsups}
\end{equation}
We now have the following near-immediate corollary.
\begin{corollary}[Uniform pathwise $L^q$ strong laws of large numbers]\label{corollary:game-theoretic sllns}
  Let $q \in [1, 2)$ and $\Pcal \subset \Pcalbar$. Let $(\Fcal_n)_{n \in \NN_0}$ be the filtration generated by $\infseqn{X_n}$ and define the event lattice
  \begin{equation}
    \Acal_\qdiv := \left ( \bigcup_{k\geq m}\left \{  \frac{|S_k|}{k^{1/q}}    \geq \eps \right \} \right )_{(m, \eps) \in \NN \times \RR^+}.
  \end{equation}
  Then the following three conditions are equivalent:
  \begin{enumerate}[label = (\roman*)]
  \item $X_1$ has a uniformly integrable $q^\tth$ moment:
    \begin{equation}
      \lim_\xto \supP \EE_\P \left [ |X_1|^q \1 \{ |X_1|^q \geq x \} \right ] = 0.
    \end{equation}
  \item The SLLN holds uniformly in $\Pcal$ at a rate of $o(n^{1/q - 1})$, meaning that
    \begin{equation}
      \forall \eps > 0,\quad \lim_\mto \supP \P \left [ \supkm  \frac{|S_k|}{k^{1/q}}   \geq \eps \right ] = 0.
    \end{equation}
    \item There exists a $\Pcal$-$e$-process that diverges pathwise to $\infty$ tail-uniformly on $\Acal_\qdiv$.
  \end{enumerate}
\end{corollary}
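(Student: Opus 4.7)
The plan is to establish the three-way equivalence as a cycle $(i) \Rightarrow (ii) \Rightarrow (iii) \Rightarrow (ii) \Rightarrow (i)$, though it will be cleaner to present it as $(i) \Rightarrow (ii)$, $(ii) \Leftrightarrow (iii)$, and $(ii) \Rightarrow (i)$, since (ii) and (iii) are linked by the same abstract equivalence and the remaining two implications simply cite earlier results. The main observation is that almost every piece is already in place by the time we reach this corollary, so the proof should essentially be a matter of matching interfaces.

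First I would note that the implication $(i) \Rightarrow (ii)$ is precisely the content of \cref{corollary:wlr-slln}: once $X_1$ has a $\Pcal$-uniformly integrable $q^\tth$ moment, applying \cref{theorem:lq-concentration} and taking a supremum over $\Pin$ on the right-hand side gives the uniform SLLN at the Marcinkiewicz-Zygmund rate. The converse $(ii) \Rightarrow (i)$ is not proved in the present paper but is stated in the discussion following \cref{corollary:wlr-slln}, citing \citet[Theorem 1]{waudby2024distribution}: uniform integrability of the $q^\tth$ moment is also necessary for the uniform SLLN to hold at rate $o(n^{1/q-1})$. So both directions between (i) and (ii) are in hand with appropriate citations.

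For the equivalence $(ii) \Leftrightarrow (iii)$, I would apply \cref{proposition:distribution-uniform-ville} directly with the event lattice $\Acal_\qdiv$ defined in the corollary. To use that proposition, one needs each set $\{|S_k|/k^{1/q} \geq \eps\}$ to be $\Fcal_k$-measurable, where $(\Fcal_n)_{n \in \NN_0}$ is the canonical filtration generated by $\infseqn{X_n}$; this is immediate since $S_k$ is $\Fcal_k$-measurable. The forward direction $(ii) \Rightarrow (iii)$ then yields an $(\Fcal_n)_{n \in \NN_0}$-adapted $\Pcal$-$e$-process that diverges pathwise to $\infty$ tail-uniformly on $\Acal_\qdiv$, and the converse $(iii) \Rightarrow (ii)$ recovers the uniform SLLN from any such $e$-process.

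There is no real obstacle: the work has already been done in \cref{corollary:wlr-slln}, \cref{proposition:distribution-uniform-ville}, and the cited \citet[Theorem 1]{waudby2024distribution}. The only thing to verify carefully is that the event lattice in the corollary matches the format $\bigcup_{k\geq m} A_k^\brackeps$ required by \cref{proposition:distribution-uniform-ville}, with $A_k^\brackeps \in \Fcal_k$, which holds by construction. Hence the proof will essentially be a one-paragraph assembly: cite \cref{corollary:wlr-slln} for $(i) \Rightarrow (ii)$, cite \cref{proposition:distribution-uniform-ville} for $(ii) \Leftrightarrow (iii)$ applied to $\Acal_\qdiv$, and cite \citet[Theorem 1]{waudby2024distribution} for $(ii) \Rightarrow (i)$.
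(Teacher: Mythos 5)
Your proposal is correct and matches the paper's proof: the paper likewise obtains $(i) \iff (ii)$ from \citet[Theorem~1]{waudby2024distribution} (with the forward direction also available via \cref{corollary:wlr-slln}) and $(ii) \iff (iii)$ by applying \cref{proposition:distribution-uniform-ville} to the lattice $\Acal_\qdiv$. No gaps.
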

\begin{proof}
  The first equivalence $(i) \iff (ii)$ follows from \citet[Theorem~1]{waudby2024distribution} and the second $(ii)\iff(iii)$ follows from \cref{proposition:distribution-uniform-ville}.
\end{proof}
Let us now move on to the case of $q=2$, where \citet{ruf2022composite} prompted the future direction of \emph{``[extending] game-theoretic constructions for the law of the iterated logarithm to [uniform] settings''}. The following corollary provides one answer to this inquiry.
\begin{corollary}[Uniform pathwise $L^2$ laws of the iterated logarithm]\label{corollary:game-theoretic lils}
  Let $\Pcal \subset \Pcalbar$ be a family of probability measures for which $X_1$ has a uniformly bounded variance, i.e., $\supP \Var_\P [X_1] \leq \widebar \sigma^2 < \infty$. Let $(\Fcal_n)_{n \in \NN_0}$ be the filtration generated by $\infseqn{X_n}$ and define the event lattice $\Acal_\fluc$ describing super-iterated-logarithm fluctuations:
  \begin{equation}
    \Acal_\fluc := \left ( \left\{ \frac{|S_n/\widebar \sigma|}{\sqrt{2 n \log \log n} } \geq 1 + \eps
    \right\} \right )_{(n, \eps) \in \NN \times \RR^+},
  \end{equation}
  recognizing that $A_\fluc := \bigcup_{\eps > 0} \bigcap_{m\geq 1} \bigcup_{k\geq m}\{ |S_k /\widebar \sigma|/ \sqrt{2  k \log \log k}  \geq 1 + \eps \}$ is the converse of the upper bound in the LIL.
  Then 
    \begin{equation}\label{eq:game-theoretic-lil-upperbound}
      \forall \eps > 0,\quad \lim_\mto \sup_\Pin \P \left [ \supkm \frac{|S_k / \widebar \sigma|}{\sqrt{2 k \log \log k}}  \geq 1 + \eps \right ] = 0
    \end{equation}
    if and only if there exists a $\Pcal$-$e$-process that diverges pathwise tail-uniformly on $\Acal_\fluc$.
\end{corollary}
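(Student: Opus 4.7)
The plan is to reduce the corollary to a direct application of \cref{proposition:distribution-uniform-ville}. Once the event lattice $\Acal_\fluc$ is identified with the tail union lattice appearing in that proposition, the ``iff'' assertion becomes immediate; no sharper concentration estimate is needed beyond what is already packaged into Ville's theorem for event lattices.

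First, I would fix $(\Fcal_n)_{n \in \NN_0}$ as the natural filtration generated by $\infseqn{X_n}$ and set $A_k^\brackeps := \{|S_k/\widebar\sigma|/\sqrt{2k\log\log k} \geq 1 + \eps\}$ for each $k \in \NN$ (with the usual conventions on $\log\log k$) and each $\eps > 0$. Since $S_k$ is $\Fcal_k$-measurable, each $A_k^\brackeps$ lies in $\Fcal_k$, so the measurability hypothesis of \cref{proposition:distribution-uniform-ville} is met. The tail union $\bigcup_{k \geq m} A_k^\brackeps$ coincides with $\{\supkm |S_k/\widebar\sigma|/\sqrt{2k\log\log k} \geq 1 + \eps\}$, and I would interpret $\Acal_\fluc$ in the corollary as the tail union lattice $(\bigcup_{k \geq m} A_k^\brackeps)_{(m, \eps) \in \NN \times \RR^+}$. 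This is consistent both with the union-intersection-union representation $A_\fluc = \bigcup_{\eps > 0} \bigcap_{m \geq 1} \bigcup_{k \geq m} A_k^\brackeps$ noted in the corollary and with the convention adopted in the analogous \cref{corollary:game-theoretic sllns} for strong laws.

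Second, I would match up the statements: the probabilities on the left-hand side of \eqref{eq:game-theoretic-lil-upperbound} are precisely $\P[\bigcup_{k\geq m} A_k^\brackeps]$, so \eqref{eq:game-theoretic-lil-upperbound} is exactly condition \eqref{eq:uniform-convergence-abstract} of \cref{proposition:distribution-uniform-ville} for the present choice of event lattice. The claimed equivalence then follows by direct invocation of the ``if and only if'' assertion of that proposition.

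I do not anticipate any substantive obstacle; the real work has been done inside \cref{proposition:distribution-uniform-ville} (which itself uses only Ville's inequality and an explicit construction). The uniformly bounded variance hypothesis $\supP \Var_\P[X_1] \leq \widebar\sigma^2 < \infty$ enters only to ensure that the normalization $\widebar\sigma$ is a well-defined finite constant, so that $\Acal_\fluc$ and the probabilities in \eqref{eq:game-theoretic-lil-upperbound} are meaningful; it plays no further role in the equivalence. In short, the corollary is a specialization of the distribution-uniform Ville theorem to the event lattice describing super-iterated-logarithm fluctuations.
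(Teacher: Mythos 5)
Your proposal is correct and matches the paper's own treatment: the paper likewise disposes of this corollary by observing that it follows immediately from \cref{proposition:distribution-uniform-ville} applied to the tail-union lattice generated by the $\Fcal_k$-measurable events $A_k^\brackeps = \{|S_k/\widebar\sigma|/\sqrt{2k\log\log k} \geq 1+\eps\}$. Your additional remarks on the measurability of $A_k^\brackeps$, the monotone lattice structure, and the purely normalizing role of the bounded-variance hypothesis are accurate and, if anything, slightly more explicit than the paper.
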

The above follows immediately from \cref{proposition:distribution-uniform-ville}. While there do exist (non-uniform) pathwise LILs in the literature such as those of \citet{sasai2019erdos}, they require more than 2 moments in the \iid{} case and hence are not viewed as pathwise counterparts of the upper bound in Kolmogorov's LIL.

\subsection{Implications for strong laws in terms of the inverse capital measure}\label{section:implications-composite}

A so-called ``composite'' generalization of Ville's theorem for families of probability measures was recently introduced to the literature by \citet{ruf2022composite}. Their generalization of a zero-probability event to a class of measures $\Pcal$ is given in terms of the \emph{inverse capital outer measure} $\nu_\Pcal$ defined for any $A \in \Fcal_\infty$, where $(\Fcal_n)_{n \in \NN_0}$ is some filtration, by
\begin{equation}\label{eq:inverse capital outer measure}
  \nu_\Pcal[A] := \inf_{\tau \in \Tcal : A \subseteq \{\tau < \infty \}} \sup_\Pin \P [\tau < \infty],
\end{equation}
where $\Tcal$ is the set of all $(\Fcal_n)_{n \in \NN_0}$-stopping times. As suggested by the name, $\nu_\Pcal$ is not a measure but  an outer measure.
In short, \citet[Theorem~3.1]{ruf2022composite} states that for an event $A \in \Fcal$, $\nu_\Pcal[A] = 0$ if and only if there exists a $\Pcal$-$e$-process diverging to $\infty$ pathwise on $A$. Such a result differs from \cref{proposition:distribution-uniform-ville} since the latter requires that the $e$-process additionally diverges pathwise to $\infty$ tail-uniformly. We illustrate the gap between these two notions of divergence in the context of the SLLN and demonstrate that tail-uniformity is a strictly stronger notion than pathwise divergence. %

In \citet[Theorem~4.3]{ruf2022composite}, the authors prove a composite SLLN which states that if $\lim_\xto \supP\U_\P(x) = 0$, then there exists a $\Pcal$-$e$-process that diverges pathwise to $\infty$ on $A_\onediv$, equivalently, $\nu_\Pcal[A_\onediv] = 0$.
The authors conjecture that \emph{``some condition like [uniform integrability] is necessary to restrict $\Pcal$''} in order to conclude that $\nu_\Pcal[A_\onediv]= 0$. We now demonstrate that uniform integrability is not a necessary condition by constructing a family $\Pcal^\star$ for which $X_1$ is not $\Pcal^\star$-uniformly integrable alongside a $\Pcal^\star$-$e$-process that diverges on $A_\onediv$. 
Indeed, for each $b \in \NN$, let
$\P_b$ be the probability measure so that $X_1$ takes the values $\pm b$ with equal $\P_b$-probability. In other words, $\P_b[X_1 = x] = (1/2)^{\1 \{ x \in \{-b,b\} \}}$ for any $x$. Letting $\Pcal^\star = \{ \P_b : b \in \NN \}$, we see that $X_1$ is not $\Pcal^\star$-uniformly integrable since for any $x \geq 0$, $\EE_{\P_b} [|X_1| \1 \{ |X_1| \geq x \}] = b$ if $b \geq x$ and $0$ otherwise. Therefore,
\begin{equation}
  \sup_{b \in \NN}\EE_{\P_b} [|X_1| \1 \{ |X_1| \geq x \}] = \infty
\end{equation}
for any $x \geq 0$. Nevertheless, consider the process %
\begin{equation}
  E_n^\star := \sum_{j\in \NN} \1 \left \{ \max_{m_j \leq k \leq n} \frac{|S_k|}{k|X_1|} \geq \frac{1}{j}  \right \},
\end{equation}
where 
\[
m_j := \min \{ m \in \NN \setminus \{1\} : 262 j^2  (m^{-1/3} + \U_{\P_1}(m^{1/3}) ) \leq 2^{-j} \}; \qquad j \in \NN,
\]
for any $b >0$ and where $\U_{\P_1}$ is defined in \cref{theorem:l1-concentration}.
Since $|S_k|/|X_1|$ under $\P_b$ has the same distribution as $|S_k|$ under $\P_1$, and by the same arguments as in \cref{SS:5.1}, but with \cref{theorem:lq-concentration} replaced by \cref{theorem:l1-concentration},
$(E_n^\star)_{n \in \NN_0}$ is a $\Pcal^\star$-$e$-process. Alternatively, one can view $(E_n^\star)_{n \in \NN_0}$ as ``waiting'' to see $X_1$, at which point $\P_b$ conditionally on $|X_1|$ equals $\P_{|X_1|}$ and is known exactly. Hence the composite behavior of $\supkm |S_k| / k$ under $\Pcal^\star$ can be reduced to pointwise behavior of $|S_k| / (k|X_1|)$ under $\P_1$.
\begin{remark}
 The event $A_\onediv$ can be represented through two different event lattices:
\begin{equation}
  A_\onediv = \bigcup_{\eps > 0} \bigcap_{m=1} \left \{ \supkm \frac{|S_k|}{k}  \geq \eps \right \}
  = \bigcup_{\eps > 0} \bigcap_{m=1} \left \{ \supkm \frac{|S_k|}{k|X_1|}  \geq \eps \right \}.
\end{equation}
Noting that the former lattice is the canonical one implicitly considered by \citet{chung_strong_1951} and which can be found in \cref{corollary:chung-slln,corollary:game-theoretic sllns}. By \cref{corollary:game-theoretic sllns} combined with the fact that $X_1$ is not $\Pcal^\star$-uniformly integrable, we have that $\infseqn{E_n^\star}$ cannot diverge tail-uniformly on the former lattice. However, it is easy to check that it does on the latter.
\end{remark}

Nevertheless, once combined with the fact that tail-uniform divergence to $\infty$ implies pointwise divergence to $\infty$, \cref{corollary:game-theoretic sllns} yields an extension of \citet[Theorem~4.3]{ruf2022composite} to $q^\tth$ moments for $q \in (1,2)$ and at the Marcinkiewicz-Zygmund rate of $o(n^{1/q-1})$. Similarly, \cref{corollary:game-theoretic lils} yields a composite LIL for random variables with finite second moments. We state these results here for the sake of completeness.
\begin{corollary}[Composite Marcinkiewicz-Zygmund-type pathwise SLLNs and a composite pathwise LIL]\label{corollary:inverse capital measure implications}
  Let $\Pcal \subset \Pcalbar$ and $q \in [1,2)$. If $\supP \U_\P^\brackq(x) \to 0$ as $\xto$, then $\nu_\Pcal[A_\qdiv] = 0$. Furthermore, if $\supP \U_\P^\bracktwo(x) \to 0$ as $\xto$, then $\nu_\Pcal[A_\fluc] = 0$.
\end{corollary}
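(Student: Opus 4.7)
My plan is to combine the uniform-pathwise results just proved (\cref{corollary:game-theoretic sllns} for the SLLN part and \cref{corollary:game-theoretic lils} for the LIL part) with a direct unpacking of the definition of the inverse capital outer measure $\nu_\Pcal$ in \eqref{eq:inverse capital outer measure}. The main idea is that tail-uniform divergence of a $\Pcal$-$e$-process on an event lattice is strictly stronger than what is needed to force $\nu_\Pcal$ to vanish on the associated union-intersection event, so the conclusion should fall out essentially for free via a Markov/first-passage argument.

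First, I will invoke \cref{corollary:game-theoretic sllns} (resp.\ \cref{corollary:game-theoretic lils}) to obtain a $\Pcal$-$e$-process $(E_n)_{n \in \NN_0}$ that diverges pathwise to $\infty$ tail-uniformly on $\Acal_\qdiv$ (resp.\ $\Acal_\fluc$). Next, I will use the representation of $A_\qdiv$ as $\bigcup_{\eps > 0} \bigcap_{m \in \NN} \bigcup_{k \geq m} \{|S_k|/k^{1/q} \geq \eps\}$ in \eqref{eq:Aqdiv as a union of nested limsups} (and the analogous one for $A_\fluc$) to verify that every $\omega$ in these events belongs to \emph{every} element of some tail of the corresponding lattice: concretely, some $\eps(\omega) > 0$ witnesses the outer union, and then $\omega$ lies in $\bigcup_{k \geq m} A_k^{(\eps(\omega))}$ for every $m \in \NN$. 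Combining this with \cref{definition:tail event-uniformity} then gives $\sup_{n \in \NN} E_n(\omega) = \infty$ for every $\omega \in A_\qdiv$, and likewise for $A_\fluc$.

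Finally, for any $U > 0$, I will introduce the first-passage stopping time $\tau_U := \min\{n \in \NN_0 : E_n \geq U\}$ and observe that by the previous step $A_\qdiv \subseteq \{\tau_U < \infty\}$. The $e$-process property applied to $\tau_U$ combined with Markov's inequality yields $\P[\tau_U < \infty] \leq 1/U$ uniformly in $\P \in \Pcal$. Plugging $\tau_U$ into the infimum defining $\nu_\Pcal[A_\qdiv]$ then gives $\nu_\Pcal[A_\qdiv] \leq 1/U$ for every $U > 0$, hence $\nu_\Pcal[A_\qdiv] = 0$. The argument for $\nu_\Pcal[A_\fluc]$ is identical after substituting $\Acal_\fluc$ and the LIL hypothesis. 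I do not anticipate a substantive obstacle: essentially all of the heavy lifting is already packaged in the previous corollaries, and the only mild subtlety to flag is that \cref{definition:tail event-uniformity} is phrased in terms of $\sup_{n \in \NN} E_n$ rather than a monotone limit, which is exactly the form needed to apply Markov's inequality at a first-passage time without any monotonicity hypothesis on the $e$-process.
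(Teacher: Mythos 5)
Your proposal is correct and follows the route the paper itself intends (the paper only sketches this corollary in the paragraph preceding its statement): obtain the tail-uniformly diverging $\Pcal$-$e$-process from \cref{corollary:game-theoretic sllns} resp.\ \cref{corollary:game-theoretic lils}, observe that tail-uniform divergence implies pointwise divergence on the union-intersection event, and conclude via the first-passage stopping time $\tau_U$; your last step is just a self-contained re-derivation of the relevant direction of the composite Ville theorem \citep[Theorem~3.1]{ruf2022composite}, mirroring the converse half of the proof of \cref{proposition:distribution-uniform-ville}. One small point to make explicit in the LIL case: \cref{corollary:game-theoretic lils} is an equivalence whose $e$-process half is conditional on the uniform LIL \eqref{eq:game-theoretic-lil-upperbound} actually holding, so you must first pass from the hypothesis $\supP \U_\P^\bracktwo(x) \to 0$ to \eqref{eq:game-theoretic-lil-upperbound} via \cref{corollary:uniform-lil} (whereas for the SLLN part, \cref{corollary:game-theoretic sllns} gives the $e$-process directly from condition (i)). With that one-line addition the argument is complete.
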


\section{Proof of \cref*{theorem:lq-concentration}}\label{proof:lq-concentration}

The proof of \cref{theorem:lq-concentration} proceeds as follows. We begin by decomposing the partial sum $\sum_{i=1}^k  X_i$ into three parts. The first is a partial sum consisting of upper-truncated versions of the $X_i$'s where the common truncation level scales with $\eps m^{\lambda / q}$ for some $\lambda \in (0,1 - q/2)$. Note that we will later set $\lambda$ to $1/2 - q/4$ to arrive at the statement of \cref{theorem:lq-concentration}. The second consists of a similar partial sum but upper- and lower-truncated to the intervals $(\eps m^{\lambda / q}, (i-1)^{1/q})$ --- where the interval is taken to be the empty set if $\eps m^{\lambda / q} > (i-1)^{1/q}$ --- in particular noting that these intervals now depend on the indices $i \in \{m, m+1, \dots \}$. The third is a partial sum consisting of the remaining parts of the $X_i$'s after being truncated in the first two terms. Recall that our goal is to control $S_k$ time-uniformly with high probability, and we do so for the first term by exploiting the fact that the applied truncation induces sub-Gaussianity of the partial sums. The second and third terms are controlled by exploiting properties of the truncated random variables combined with Kolmogorov's inequality and summation by parts. Of particular note is that we will not simply apply Kronecker's lemma on a set of probability one (a common technique in classical proofs of SLLNs), but our use of summation by parts implicitly plays an analogous role in a non-asymptotic manner.

\begin{proof}[Proof of \cref{theorem:lq-concentration}]
  Fix $m \in \NN$, $\eps > 0$, and $\lambda \in (0,1-q/2)$. We begin by considering the following three-term decomposition of $S_k$ and applying the triangle inequality to obtain
\begin{equation}
  \frac{1}{k^{1/q}} \left \lvert S_k \right \rvert \leq \frac{1}{k^{1/q}} \left \lvert \sum_{i=1}^k Y_i \right \rvert + \frac{1}{k^{1/q}} \left \lvert \sum_{i=1}^k Z_i \right \rvert + \frac{1}{k^{1/q}} \left \lvert \sum_{i=1}^k R_i \right \rvert, 
\end{equation}
where 
\begin{align}
  Y_i &:= X_i \1 \{ |X_i| \leq \eps m^{\lambda/q} \} - \EE_\P \left [ X_i \1 \{ |X_i| \leq \eps m^{\lambda/q} \} \right ],\\
  Z_i &:= X_i \1 \{ \eps m^{\lambda/q} < |X_i| \leq (i-1)^{1/q} \} - \EE_\P \left [  X_i \1 \{ \eps m^{\lambda/q} < |X_i| \leq (i-1)^{1/q} \} \right ],\\
  R_i &:= X_i \1 \{ |X_i| > \eps m^{\lambda/q} \lor (i-1)^{1/q} \} - \EE_\P \left [ X_i \1 \{ |X_i| > \eps m^{\lambda/q} \lor (i-1)^{1/q} \} \right ].
\end{align}
In the following three steps, we will derive time-uniform concentration inequalities for the partial sums of $\infseqn{X_n}$, $\infseqn{Y_n}$, and $\infseqn{Z_n}$, respectively, ultimately combining them in the fourth step to obtain the desired result.

\paragraph{Step I: Time-uniform concentration of $\sum_{i=1}^k Y_i$.}
Observe that by truncating at $\eps m^{\lambda/q}$, the random variable $Y_i$ is supported on a finite interval $[a,b]$ with $b-a = 2 \eps m^{\lambda /q}$ and hence $Y_i$ is sub-Gaussian with variance proxy $(b-a)^2 / 4 = \eps^2 m^{2\lambda /q}$ for each $i \in \NN$. Moreover, these summands are all independent of each other. %
Exploiting this sub-Gaussianity, we employ a concentration inequality due to \citet[Theorem~1]{howard2018uniform}, which implies that for any $\alpha \in (0, 1)$, and any function $h : [0, \infty) \to (0, \infty)$, it holds that 
\begin{equation}\label{eq:howard-stitched}
  \P \left [ \exists k \in \NN : \left \lvert \sum_{i=1}^k Y_i \right \rvert \geq \eps m^{\lambda/q} \sqrt{c^2 k \left ( \log h \left ( \log(k) \right ) + \log(1/\alpha) \right )} \right ] \leq  \alpha \sum_{j=0}^\infty \frac{1}{h(j)},
\end{equation}
where $c :=  \left ( e^{1/4} + e^{-1/4} \right ) / \sqrt{2}$.
Now, define
\begin{equation}
  h(x) := \exp \left ( \left ( e^x + m \right )^{2(1-\lambda) /q - 1} \right ); \qquad x \geq 0. %
\end{equation}
Take $\alpha := \exp ( -m^{2(1-\lambda)/q - 1} )$
 so that we can re-write \eqref{eq:howard-stitched} as
\begin{equation}
  \P \left [ \exists k \in \NN : \left \lvert \sum_{i=1}^k Y_i \right \rvert \geq b_k \right ] \leq \exp \left (-m^{2(1-\lambda)/q - 1} \right ) \sum_{j=0}^\infty \frac{1}{h(j)} \leq  H \exp \left (-m^{2(1-\lambda)/q - 1} \right ), 
\end{equation}
where $H := \sum_{j=0}^\infty \exp ( -(e^j)^{2(1-\lambda)/q - 1} ) \geq \sum_{j=0}^\infty 1/h(j)$ and the boundary $b_k$ is given by 
\begin{equation}
  b_k := \eps m^{\lambda/q} \sqrt{c^2 k \left ( (k + m)^{2(1-\lambda) /q-1} +  m^{2(1-\lambda)/q - 1} \right )},
\end{equation}
Notice  we have  for any $k \geq m$,
\begin{align}
         b_k \leq \eps m^{\lambda/q}\sqrt{2 c^2k   (k+m)^{2(1-\lambda)/q-1}} \leq \eps \sqrt{2} c m^{\lambda/q} (k+m)^{(1-\lambda)/q}
\leq \eps \sqrt{2}c (k+m)^{1/q}.
\end{align}
Therefore, we have the following time-uniform bound on $\supkm |k^{-1/q} \sum_{i=1}^k Y_i|$:
\begin{align}
  \P \left [ \sup_{k \geq m} \frac{1}{k^{1/q}} \left \lvert \sum_{i=1}^k Y_i \right \rvert \geq 2^{1/q+1/2} c \eps \right ] &= \P \left [ \sup_{k \geq m} \frac{1}{(2k)^{1/q}} \left \lvert \sum_{i=1}^k Y_i \right \rvert \geq \sqrt{2} c \eps \right ]\\
  \verbose{
                 &\leq \P \left [ \sup_{k \geq m} \frac{1}{(k + m)^{1/q}} \left \lvert \sum_{i=1}^k Y_i \right \rvert \geq \sqrt{2} c\eps \right ] \\
  }
                     &\leq \P \left [ \exists k \geq 1 :  \left \lvert \sum_{i=1}^k Y_i \right \rvert \geq  \sqrt{2}c \eps (k+m)^{1/q} \right ] \\
  &\leq H \exp \left (-m^{2(1-\lambda)/q - 1} \right ).
\end{align}
Notice that $H$ can be upper bounded as follows:
\begin{align}
  H = \sum_{j=0}^\infty \exp \left (- e^{ j(2(1-\lambda)/ q - 1) } \right )
    \leq 1 + \int_{0}^\infty \exp \left ( - e^{ y(2(1-\lambda)/ q - 1) } \right )\dd y
  = 1 + \frac{\int_{1}^\infty t^{-1} e^{-t} \dd t}{2(1-\lambda)/q - 1}.
\end{align}
Note that $\int_1^\infty t^{-1} e^{-t} \dd t \leq e^{-1} < 1/2$.
Observing that $2^{1/q + 1/2} c \leq 2(e^{1/4} + e^{-1/4}) < 4.13$, we can write the time-uniform bound on $\supkm |k^{-1/q} \sum_{i=1}^k Y_i|$ as
\begin{align}
  \P \left [ \sup_{k \geq m} \frac{1}{k^{1/q}} \left \lvert \sum_{i=1}^k Y_i \right \rvert \geq 4.13 \eps \right ] &\leq \left ( 1 + \frac{q}{4(1-\lambda) - 2q} \right ) \exp \left(-m^{2(1-\lambda)/q - 1} \right ),
\end{align}
which completes Step I.

\paragraph{Step II: Time-uniform concentration of $\sum_{i=1}^k Z_i$.}

For this step, we will use the following lemma.
\begin{lemma}\label{lemma:maximal weighted sum inequality}
  Let $\infseqn{a_n}$ be a monotonically nondecreasing and strictly positive sequence and let $\infseqn{b_n}$ be any real sequence. Then for any  $M \in \NN$, we have
  \begin{equation}
    \max_{ 1\leq k \leq M} \frac{|\sum_{i=1}^k b_i |}{a_k} \leq 2 \max_{1 \leq k \leq M} \left \lvert \sum_{i=1}^k \frac{b_i}{a_i} \right \rvert. \end{equation}
\end{lemma}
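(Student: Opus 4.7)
My plan is to prove this as a purely deterministic statement via Abel's summation by parts, using the monotonicity of $(a_n)$ to keep all relevant weights non-negative. Set $B_k := \sum_{i=1}^k b_i$ and $T_k := \sum_{i=1}^k b_i / a_i$ with the convention $T_0 := 0$. Since $b_i = a_i(T_i - T_{i-1})$, summation by parts yields
\begin{equation*}
  B_k \;=\; a_k T_k - \sum_{i=1}^{k-1}(a_{i+1} - a_i)\, T_i.
\end{equation*}
This is the key identity; the rest is triangle inequality bookkeeping.

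Dividing through by $a_k > 0$ and applying the triangle inequality, I would bound the sum-term by pulling $M_k := \max_{1 \leq i \leq k}|T_i|$ outside. This pull is legitimate precisely because the monotonicity hypothesis gives $a_{i+1} - a_i \geq 0$, so no sign cancellations are lost. The resulting weights telescope,
\begin{equation*}
  \frac{1}{a_k}\sum_{i=1}^{k-1}(a_{i+1} - a_i) \;=\; \frac{a_k - a_1}{a_k} \;\leq\; 1,
\end{equation*}
and combining with $|T_k| \leq M_k$ gives $|B_k|/a_k \leq 2 M_k$. Taking a maximum over $1 \leq k \leq M$ on both sides completes the proof, since $M_k \leq \max_{1 \leq j \leq M}|T_j|$ for every $k \leq M$.

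I do not expect a genuine obstacle here: monotonicity of $(a_n)$ is used exactly once (to ensure non-negativity of the telescoped weights) and the only other ingredient is the triangle inequality. The factor $2$ is the natural cost of splitting into the boundary term $a_k T_k$ and the telescoping sum, each of which can individually be of size $a_k M_k$; if one instead had $a_n$ constant the identity would collapse to $B_k = a_k T_k$ and no factor of $2$ would be needed, so the constant is essentially the right one in the general monotone setting.
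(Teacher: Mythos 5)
Your proof is correct and is essentially the paper's own argument: the same Abel summation identity (the paper writes it with the convention $a_0:=0$, which shifts the telescoping sum's index but is otherwise identical), followed by the same triangle-inequality and telescoping step using monotonicity of $(a_n)$. No gaps.
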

\begin{proof}
  For $n \in \NN$, define $b_n' := b_n / a_n$, 
  \begin{equation}
     T_n := \sum_{i=1}^n b_i, \quad\text{and}\quad T_{n}' := \sum_{i=1}^n b_i'.
  \end{equation}
  Using summation by parts, we can write $T_k$ for each $k \in \NN$ as
  \begin{align}
    T_k = \sum_{i=1}^k a_ib_i' = \sum_{i=1}^k a_i(T_i' - T_{i-1}') = a_k T_k' - \sum_{i=1}^k (a_i - a_{i-1}) T_{i-1}',
  \end{align}
  where we set $a_0 := 0$.
  Therefore, for each $k \in \NN$, 
  \begin{align}
    |T_k| &\leq a_k |T_k'| +  \sum_{i=1}^k (a_i - a_{i-1}) |T_{i-1}'| \leq a_k |T_k'| +  \left (\max_{1 \leq j \leq k} |T_{j-1}'|  \right )\sum_{i=1}^k (a_i - a_{i-1}) 
    \leq 2 a_k \max_{1 \leq j \leq k} |T_j'|.
  \end{align}
  Dividing both sides by $a_k$ and taking maxima over $k \in \{1, \dots, M\}$ completes the proof.
\end{proof}

We now continue with Step II. By monotone convergence, \cref{lemma:maximal weighted sum inequality}, and Kolmogorov's inequality, we have 
\begin{align}
  \P \left [ \supkm \frac{1}{k^{1/q}} \left \lvert \sum_{i=1}^k Z_i \right \rvert \geq \eps \right ] &\leq \lim_{M\nearrow \infty} \P \left [ \max_{1 \leq k \leq M} \left \lvert \sum_{i=1}^k \frac{Z_i}{i^{1/q}} \right \rvert \geq \frac{\eps}{2} \right ]
  \leq \frac{4}{\eps^2} \EE_\P \left [ \sum_{i=1}^\infty \frac{Z_i^2}{i^{2/q}} \right ].
\end{align}
Since $\EE_\P \left [ Z_i^2 \right ] \leq \EE_\P \left [ X_1^2 \1 \{ \eps m^{\lambda/q} < |X_1| \leq (i-1)^{1/q} \} \right ]$ for each $i \in \NN$, we get 
\begin{align}
  \EE_\P \left [ \sum_{i=1}^\infty \frac{Z_i^2}{i^{2/q}} \right ] &= \sum_{i=1}^\infty \EE_\P \left [ \frac{Z_i^2}{i^{2/q}} \right ] \\
  &= \EE_\P \left [ X_1^2 \1 \{ |X_1| > \eps m^{\lambda/q} \} \sum_{i=1}^\infty \1 \{ |X_1|^q \leq i-1 \} \frac{1}{i^{2/q}} \right ]\\
  &\leq \EE_\P \left [ X_1^2 \1 \{ |X_1| > \eps m^{\lambda/q} \} \int_{|X_1|^q}^\infty  \frac{1}{y^{2/q}}\dd y \right ]\\
  \verbose{
  &= \EE_\P \left [ X_1^2 \1 \{ |X_1| > \eps m^{\lambda/q} \} \left ( -\frac{2}{q} + 1 \right ) \left ( \underbrace{\lim_{y \nearrow \infty} y^{-2/q + 1}}_{=0} - [|X_1|^q]^{-2/q + 1} \right ) \right ]\\
  }
  &= \EE_\P \left [ \cancel{X_1^2} \1 \{ |X_1| > \eps m^{\lambda/q} \} \left ( \frac{2}{q} - 1 \right ) \cancel{X_1^{-2}} |X_1|^q \right ]\\
  \verbose{
  &= (2/q - 1) \EE_\P \left [ |X_1|^q \1 \{ |X_1|^q > \eps^q m^\lambda \} \right ]\\
  }
 &\leq 
 \U_\P^\brackq(\eps^q m^\lambda)
\end{align}
Putting the above arguments together, we have the following concentration inequality for $\supkm k^{-1/q} | \sum_{i=1}^k Z_i |$:
\begin{align}
  \P \left [ \supkm \frac{1}{k^{1/q}} \left \lvert \sum_{i=1}^k Z_i \right \rvert \geq \eps \right ] &\leq \frac{4}{\eps^2}\U_\P^\brackq(\eps^q m^\lambda).
\end{align}

\paragraph{Step III: Time-uniform concentration of $\sum_{i=1}^k R_i$.}
For any $i \in \NN$, set $\mu_i := \EE_\P \left [ X_i \1 \{ |X_i| > \eps m^{\lambda/q} \lor (i-1)^{1/q} \} \right ]$ so that $R_i$ can be written more succinctly as
\begin{equation}
  R_i = X_i \1 \{ |X_i|^q > \eps^q m^\lambda \lor (i-1) \} - \mu_i.
\end{equation}
Hence we can upper-bound the desired tail probability $\P \left [ \supkm k^{-1/q} | \sum_{i=1}^k R_i | \geq \eps \right ]$ as 
\begin{align}
  \P \left [ \supkm \frac{1}{k^{1/q}} \left \lvert \sum_{i=1}^k R_i \right \rvert \geq \eps \right ] &\leq \underbrace{\1 \left \{ \supkm \frac{1}{k^{1/q}} \left \lvert \sum_{i=1}^k \mu_i \right \rvert \geq \eps \right \}}_{(\star)}  + \underbrace{\P \left [ \exists i \in \mathbb{N} : |X_i|^q > \eps^q m^\lambda \lor (i-1) \right ]}_{(\dagger)}.
\end{align}
Upper-bounding the indicator $(\star)$, we have
\begin{align}
(\star) &\leq \underbrace{\1 \left \{ \supkm \frac{1}{k^{1/q}} \EE_\P \left [ X_1 \1 \{ |X_1|^q > \eps^q m^\lambda \} \right ] \geq \frac{\eps}{2} \right \}}_{(\star i)}\\
  &\quad+ \underbrace{\1 \left \{ \supkm  \frac{1}{k^{1/q}} \sum_{i=2}^k \EE_\P \left [ \frac{(i-1)^{1/q}X_i}{(i-1)^{1/q}} \1 \{ |X_i|^q > \eps^q m^\lambda \lor (i-1) \} \right ] \geq \frac{\eps}{2} \right \}}_{(\star ii)}.
\end{align}
Notice now that since $m \in \NN$, we have that $|X_1| / \eps \leq |X_1|^q / \eps^q$ on the event $\{|X_1|^q > \eps^q m^\lambda\}$ and thus $(\star i)$ can be upper bounded as
\begin{align}
  (\star i) \leq 2\EE_\P\left [\frac{|X_1|}{\eps} \1 \{ |X_1|^q > \eps^q m^\lambda \} \right ] \leq \frac{2}{\eps^q} \U_\P^\brackq(\eps^q m^\lambda).
\end{align}
Turning to the second term $(\star ii)$, notice that on the event $\{ |X_i|^q > i-1 \} $, we have  $|X_i| / (i-1)^{1/q} \leq |X_i|^q / (i-1)$ by virtue of the fact that $q \in [1,2)$, and hence $(\star ii)$ can be upper-bounded as
\begin{align}
  (\star ii) &\leq \1 \left \{ \supkm \frac{1}{k^{1/q}} \sum_{i=2}^k \EE_\P \left [ \frac{(i-1)^{1/q}|X_1|^q}{i-1} \1 \{ |X_1|^q > \eps^q m^\lambda \lor (i-1) \} \right ] \geq \frac{\eps}{2} \right \} \\
  &\leq \1 \left \{ \supkm \frac{1}{k^{1/q}} \sum_{i=2}^k \EE_\P \left [ (i-1)^{1/q-1}|X_1|^q \1 \{ |X_1|^q > \eps^q m^\lambda \} \right ] \geq \frac{\eps}{2}\right \}\\
&\leq 
               \1 \left \{ \supkm \frac{\EE_\P \left [ |X_1|^q \1 \{ |X_1|^q > \eps^q m^\lambda \} \right ]}{k^{1/q}} \int_{1}^{k+1} (y-1)^{1/q-1}\dd y \geq \frac{\eps}{2}\right \}\\
             &= \1 \left \{ \supkm \frac{\EE_\P \left [ |X_1|^q \1 \{ |X_1|^q > \eps^q m^\lambda \} \right ]}{\cancel{k^{1/q}}} q \cancel{k^{1/q}} \geq \frac{\eps}{2} \right \}\\
    &\leq \frac{4}{\eps}\U_\P^\brackq(\eps^q m^\lambda).
\end{align}
Putting the bounds on $(\star i)$ and $(\star ii)$ together, we have that
\begin{equation}
  (\star) \leq (\star i ) + (\star ii) \leq \left ( \frac{2}{\eps^q} + \frac{4}{\eps} \right ) \U_\P^\brackq(\eps^q m^\lambda).
\end{equation}
Turning now to $(\dagger)$, we note that if $|X_i|^q > \eps^q m^\lambda \lor (i-1)$, then $|X_i|^q \1 \{ |X_i|^q > \eps^q m^\lambda \} > i-1$ and thus we can union bound to obtain
\begin{align}
  (\dagger) &\leq \sum_{i=1}^\infty \P \left [  |X_1|^q > \eps^q m^\lambda \lor (i-1)  \right ]
  \leq \P \left [ |X_1|^q > \eps^q m^\lambda \right ] + \sum_{i=2}^\infty \P \left [ |X_1|^q \1 \{ |X_1|^q > \eps^q m^\lambda \} > i-1 \right ].
\end{align}
Now, clearly 
\[
    \P [|X_1|^q > \eps^q m^\lambda] \leq \EE_\P[|X_1|^q / (\eps^q m^\lambda) \1 \{ |X_1|^q > \eps^q m^\lambda \}] \leq \eps^{-q}\U_\P^\brackq(\eps^q m^\lambda),
\]
so we can continue the above upper bound as
\begin{align}
(\dagger) &\leq \frac{1}{\eps^{q}}\U_\P^\brackq(\eps^q m^\lambda)+ \sum_{i=1}^\infty \P \left [ |X_1|^q \1 \{ |X_1|^q > \eps^q m^\lambda \} > i \right ]\\
              &\leq \frac{1}{\eps^q} \U_\P^\brackq(\eps^q m^\lambda) + \int_0^\infty \P [|X_1|^q \1 \{ |X_1|^q > \eps^q m^\lambda \} > u]\dd u\\
  &= \left ( 1 + \frac{1}{\eps^q} \right )\U_\P^\brackq(\eps^q m^\lambda).
\end{align}
Putting the bounds on $(\star)$ and $(\dagger)$ together, we obtain that
\begin{align}
  \P \left [ \supkm \frac{1}{k^{1/q}} \left \lvert \sum_{i=1}^k R_i \right \rvert \geq \eps \right ] &\leq \left ( 1 + \frac{4}{\eps} + \frac{3}{\eps^q} \right ) \U_\P^\brackq(\eps^q m^\lambda) \leq \frac{8}{\eps^2 \land 1} \U_\P^\brackq(\eps^q m^\lambda).
\end{align}

\paragraph{Step IV: Combining the concentration bounds of Steps I--III.}
Finally, combining the results from the previous three steps yields
\begin{align}
  &\P \left [ \supkm \frac{1}{k^{1/q}} \left \lvert \sum_{i=1}^k X_i  \right \rvert \geq 6.13\eps \right ] \leq c_{q,\lambda} \exp \left ( -m^{2(1-\lambda)/q - 1} \right )+ \frac{12}{\eps^2 \land 1} \U_\P^\brackq(\eps^q m^\lambda),
\end{align}
where $c_{q, \lambda} :=  (4(1-\lambda) - q)/(4(1-\lambda) - 2q)$. Taking $\lambda = 1/2 - q/4$ completes the proof.
\end{proof}

\section{Proofs of \cref*{theorem:lil} and \cref*{corollary:self-normalized-lil}}\label{proof:lil}
  
\begin{proof}[Proof of \cref{theorem:lil}]
  Fix $\eps > 0$. Let $V_n := \sum_{i=1}^n \left ( X_i^2 + 2\sigmasqub \right )/3$ and $\psi$ be the function given by $\psi(\lambda) := \lambda^2 / 2$, $\lambda \in \RR$. By \citet[Lemma 3(f)]{howard_exponential_2018}, we have that $\infseqn{X_n}$ is a sub-Gaussian process with cumulative variance proxy $\infseqn{V_n}$, meaning that the exponential process
\begin{equation}
  \exp \left ( \lambda S_n - \psi(\lambda) V_n \right ) %
\end{equation}
is upper-bounded by a nonnegative supermartingale starting at one with respect to the filtration $(\Fcal_n)_{n\in \NN_0}$ generated by the process $(X_n)_{n \in \NN}$. Therefore, by \citet[Theorem~1]{howard2018uniform}, for $\eta = 1+\eps$ and any $v_0 > \sigmasqub$ (to be set later), 
\begin{equation}\label{eq:invoking-howard-theorem1}
  \P \left [ \exists n \in \NN : V_n  \geq v_0 ~\text{and}~ S_n \geq b(V_n)  \right ] \leq \sum_{j=\lfloor \log_\eta (v_0 / \sigmasqub) \rfloor}^\infty \frac{1}{h(j)},
\end{equation}
where $h(j) := (j+1)^\eta \zeta(\eta)$ so that $\sum_{j=0}^\infty 1/h(j) = 1$ %
by construction, and where
\begin{equation}
  b(v) := \frac{\eta^{1/4} + \eta^{-1/4}}{\sqrt{2}}  \sqrt{v  \left ( \eta \log \left ( \log_\eta \left ( \eta v / \sigmasqub \right ) \right ) + \log \left ( 2\zeta(\eta) \right )\right )}, \qquad v > 0.
\end{equation}
Note that whenever $v \geq \sigmasqub$, the boundary $b(v)$ can be upper-bounded as
\begin{align}
  b(v) &= \xi \sqrt{v  \left ( \eta\log \log \left ( \eta v / \sigmasqub \right ) + \ell_\eps \right )} < \xi \sqrt{v \eta \left ( \log \log \left ( \eta v/\sigmasqub \right ) + \ell_\eps \right )},
\end{align}
where we write $\xi := (\eta^{1/4} + \eta^{-1/4}) / \sqrt{2}$ and $\ell_\eps := \log(2 \zeta(\eta) / ( \log(\eta)))$ for succinctness. Putting the above upper bound together with \eqref{eq:invoking-howard-theorem1}, we have for any $v_0 > \sigmasqub$,
\begin{align}
  \P \left [ \exists n \in \NN : V_n  \geq v_0 ~\text{and}~ S_n \geq \xi  \sqrt{V_n  \eta \left (  \log \left ( \log \left ( \eta V_n / \sigmasqub \right ) \right ) + \ell_\eps \right )}  \right ] \leq \sum_{j=\lfloor \log_\eta (v_0 / \sigmasqub) \rfloor}^\infty \frac{1}{h(j)}. \label{eq:invoking-howard-refined}
\end{align}
Consider next the event $A_m$ for each $m \in \NN$ given by
\begin{equation}
  A_m := \left \{ \forall k \geq m,~ \left \lvert \frac{1}{k} \sum_{i=1}^k X_i^2 - \sigma_\P^2 \right \rvert < \eps \sigmasqub \right \}.
\end{equation}
Notice that by definition of $V_k$, we have, on the event $A_m$, for every $k \geq m$,
\begin{equation}
  V_k = \frac{1}{3}\sum_{i=1}^k (X_i^2 + 2 \sigmasqub) < \frac{1}{3} \left ( k (\sigma_\P^2 + \eps \sigmasqub) + 2k(\sigmasqub + \eps \sigmasqub) \right ) \leq k (1+\eps) \sigmasqub.
\end{equation}
Moreover, we have, for every $k \in \NN$,
\begin{equation}
    V_k = \frac{1}{3} \sum_{i=1}^k (X_i^2 + 2 \sigmasqub) \geq \frac{2k}{3} \sigmasqub,
\end{equation}
and hence the following sequence of inequalities:
\begin{align}
  &\P \left [ \supkm  \frac{|S_k / \widebar \sigma|}{ \xi\sqrt{(1+\eps)^2   k \left ( \log \log ( (1 + \eps )^2 k) + \ell_\eps \right )}} 
  \geq 1 \right ]\\
  =\ &\P \left [ \exists k \geq 1 : k \sigmasqub  \geq m \sigmasqub  \text{ and }  \frac{|S_k/\widebar \sigma|}{\xi \sqrt{(1+\eps) k  \eta \left (\log \log ( \eta (1 + \eps) k  ) + \ell_\eps \right )}} 
  \geq 1 \right ]\\
  \leq\  &\P \left [ \exists k \geq 1 : V_k \geq \frac{2}{3} m\sigmasqub \text{ and } \frac{ |S_k|}{ \xi \sqrt{V_k \eta \left ( \log \log ( \eta V_k / \sigmasqub) + \ell_\eps \right )}} 
           \geq 1 \right ] + \P \left [ \supkm  \left \lvert \frac{1}{k} \sum_{i=1}^k X_i^2  - \sigma_\P^2 \right \rvert \geq \eps \sigmasqub \right ].
\end{align}
Analyzing the first term, we apply \eqref{eq:invoking-howard-refined} with $v_0 = 2m\sigmasqub / 3$, noticing that since $m \in \NN \setminus \{1\}$, we have  $v_0 \geq 4 \sigmasqub /3 > \sigmasqub$ and hence 
\begin{align}
  \P \left [ \exists k \geq 1 : V_k \geq \frac{2}{3} m \sigmasqub \text{ and }\frac{ |S_k|}{\xi\sqrt{V_k \eta  \left ( \log \log (\eta V_k / \sigmasqub ) + \ell_\eps \right )}} 
    \geq 1 \right ] &\leq \sum_{j=\lfloor \log_\eta (2m/3) \rfloor}^\infty \frac{1}{h(j)}\\
  &\leq \int_{\log_\eta(2m/3) - 1}^\infty \frac{1}{(x+1)^\eta \zeta(\eta)} \dd x\\
                  &\leq \frac{ \log_\eta^{1-\eta}(2m/3)}{(\eta-1)\zeta(\eta)}.
\end{align}
Analyzing the second term, we have by \cref{theorem:l1-concentration},
\begin{align}
\P \left [ \supkm \left \lvert  \frac{1}{k} \sum_{i=1}^k X_i^2 - \sigma_\P^2 \right \rvert \geq \eps \sigmasqub \right ] \leq \frac{262}{(\widebar \sigma^4 \eps^2) \wedge 1} \left ( m^{2\lambda - 1} + \EE_\P \left [|X_1^2 - \sigma_\P^2| \1 \{ |X_1^2 - \sigma_\P^2| \geq m^\lambda \}\right ] \right ).
\end{align}
Combining these two estimates and recalling that $\eta = 1+ \eps$, we get 
\begin{align}
  &\P \left [ \supkm  \frac{| S_k/ \widebar \sigma|}{ \xi\sqrt{(1 + \eps)^2 k \left ( \log \log ( (1 + \eps )^2 k) + \ell_\eps \right )}} 
  \geq 1 \right ] \\
  \leq\ &\frac{\log_{1+\eps}^{-\eps}(2m/3)}{\eps \zeta(1+\eps)} + \frac{262}{(\widebar \sigma^4 \eps^2) \wedge 1} \left ( m^{2\lambda - 1} + \EE_\P[|X_1^2 - \sigma_\P^2| \1 \{ |X_1^2 - \sigma_\P^2| \geq m^\lambda \}] \right ).
\end{align}
Letting $c_\eps := (1+\eps)\xi$ yields the desired result and completes the proof of \cref{theorem:lil}.\footnote{Note that we are employing \cref{theorem:l1-concentration} in an intermediate step of the proof of \cref{theorem:lil}. One may wonder why we do not use the improved \cref{theorem:lq-concentration} instead since certain polynomially vanishing terms are replaced by some that vanish exponentially fast. The reason for this is because \cref{theorem:lil} (and other iterated logarithm inequalities) have an additional term that vanishes logarithmically in $m$ which will always dominate any polynomial (or exponential) term.}
\end{proof}

\begin{proof}[Proof of \cref{corollary:self-normalized-lil}]
Fix $\eps > 0$ and $m \in \NN \setminus \{1\}$. Applying \cref{theorem:lil} to the random variables $\infseqn{X_n/\sigma_\P}$ with $\sigmasqub = 1$, we have 
\begin{align}
  &\P \left [ \supkm \frac{ |S_k/ \sigma_\P|}{c_\eps \sqrt{k  \left ( \log \log ( (1 + \eps )^2 k) + \ell_\eps \right )}} 
  \geq 1 \right ] \\
  \leq\ &\frac{1}{\eps \log_{1+\eps}^{\eps}(2m/3) \zeta(1+\eps)} + \frac{262}{\eps^2 \wedge 1} \left ( m^{2\lambda - 1} + \EE_\P[|(X_1/ \sigma_\P)^2 - 1| \1 \{ |(X_1/ \sigma_\P)^2 - 1| \geq m^\lambda \}] \right )\\
  \leq\ &\frac{1}{\eps \log_{1+\eps}^{\eps}(2m/3) \zeta(1+\eps)} + \frac{262}{\eps^2 \wedge 1} \left (m^{2\lambda - 1} + \U_\P^\bracktwo(m^\lambda)\right ),
\end{align}
where the last inequality uses that $m^\lambda > 1$.
Noticing that for each $n \geq m$ we can write the difference $\widehat \sigma_n^2 - \sigma_\P^2$ as
\begin{align}
  \widehat \sigma_n^2 - \sigma_\P^2 = \frac{1}{n} \sum_{i=1}^n (X_i^2 - \widehat \mu_n^2) - \EE_\P[X_1^2] = \frac{1}{n} \sum_{i=1}^n X_i^2 - \EE_\P[X_1^2] - \widehat \mu_n^2,
\end{align}
we have that on the event  
\begin{equation}
  A := \bigcap_{k=m}^\infty \left \{ \left \lvert \frac{1}{k} \sum_{i=1}^k \frac{X_i^2}{\sigma_\P^2} - 1\right \rvert < \eps \text{  and  } \left \lvert \frac{1}{k} \sum_{i=1}^k \frac{X_i}{\sigma_\P} \right \rvert < \sqrt{\eps} \right \},
\end{equation}
it holds that $|\widehat \sigma_\P^2/\widehat \sigma_k^2 - 1| < 2\eps$ and hence $|\sigma_\P / \widehat \sigma_k| < \sqrt{1 + 2\eps}$ for all $k \geq m$.
Applying \cref{theorem:l1-concentration} to $\infseqn{X_n/\sigma_\P}$ and $\infseqn{X_n^2 / \sigma_\P^2}$, respectively, we have 
\begin{align}
  \P \left [ \supkm  \left \lvert \frac{1}{k}\sum_{i=1}^k \frac{X_i}{\sigma_\P} \right \rvert \geq \sqrt{\eps} \right ] &\leq \frac{262}{\eps \land 1} \left(m^{2\lambda - 1} + \EE_\P\left[\left|\frac{X_1}{ \sigma_\P}\right| \1 \{ |X_1 / \sigma_\P| \geq m^\lambda \}\right]\right) 
  \leq \frac{262}{\eps^2 \land 1} (m^{2\lambda - 1} + \U_\P^\bracktwo(m^\lambda))
\end{align}
and
\begin{align}
  \P \left [ \supkm \left \lvert \frac{1}{k} \sum_{i=1}^k \frac{X_i^2}{\sigma_\P^2} - 1 \right \rvert \geq \eps \right ] &\leq \frac{262}{\eps^2 \land 1} \left(m^{2\lambda - 1} + \EE_\P\left[\left|\frac{X_1^2}{ \sigma_\P^2}-1\right|  \1 \{ |X_1^2 / \sigma_\P^2 - 1|\geq m^\lambda \}\right]\right)\\
  &\leq \frac{262}{\eps^2 \land 1} (m^{2\lambda - 1} + \U_\P^\bracktwo(m^\lambda)).
\end{align}
Putting all of the previous inequalities together and recalling that on the event $A$, it holds $|\sigma_\P / \widehat \sigma_k| < \sqrt{1+2\eps}$, we have
\begin{align}
  \P \left [ \supkm  \frac{| S_k / \widehat \sigma_k|}{c_\eps \sqrt{(1 + 2\eps) k  \left ( \log \log ( (1 + \eps )^2 k) + \ell_\eps \right )}}  \geq 1 \right ] 
  &=\P \left [ \supkm \frac{|\sigma_\P / \widehat \sigma_k|}{\sqrt{1+2\eps}} \frac{  |S_k / \sigma_\P|}{c_\eps  \sqrt{k  \left ( \log \log ( (1 + \eps )^2 k) + \ell_\eps \right )}} \geq 1 \right ]\\
  &\leq \P \left [ \supkm  \frac{ |S_k / \sigma_\P|}{c_\eps \sqrt{ k  \left ( \log \log ( (1 + \eps )^2 k) + \ell_\eps \right )}} \geq 1 \right ] + \P [A^c]\\
  &\leq \frac{1}{\eps \log^{\eps}_{1+\eps}(2m/3) \zeta(1+\eps)} + \frac{786}{\eps^2 \land 1} \left ( m^{2\lambda - 1} + \U_\P^\bracktwo (m^\lambda) \right ),
\end{align}
completing the proof of \cref{corollary:self-normalized-lil}.
\end{proof}

\subsection*{Acknowledgments}
The authors thank Siva Balakrishnan, Raf Frongillo, and Aaditya Ramdas for helpful discussions.

\bibliographystyle{plainnat}
\bibliography{references.bib}

\end{document}